\newtheorem{theorem}{Theorem}[section]
\newtheorem{problemz}{Problem}
\newtheorem{definition}[theorem]{Definition}
\newtheorem{example}[theorem]{Example}
\newtheorem{lemma}[theorem]{Lemma}
\newtheorem{prop}[theorem]{Proposition}
\newtheorem{remark}[theorem]{Remark}
\numberwithin{equation}{section}
\newcommand{\Rr}{\mathbb R}
\renewcommand{\d}{\mathrm d}
\newcommand{\J}{\ensuremath{\mathcal{J}}}
\DeclareMathOperator{\Aut}{Aut} 
\DeclareMathOperator{\End}{End}
\DeclareMathOperator{\GL}{GL}
\DeclareMathOperator{\pr}{pr}
\renewcommand{\hom}{\mathrm{Hom}}
\DeclareMathOperator{\modular}{mod}     
\renewcommand{\mod}{\modular}
\newcommand{\al}{\alpha}                
\renewcommand{\gg}{\mathfrak{g}}        
\newcommand{\hh}{\mathfrak{h}}          
\newcommand{\ggl}{\mathfrak{gl}}          
\renewcommand{\Im}{\text{\rm im}\,}     
\newcommand{\Ad}{\text{\rm Ad}\,}       
\newcommand{\ad}{\text{\rm ad}\,}       
\newcommand{\rank}{\text{\rm rank}\,}   
\newcommand{\K}{\mathcal{K}}                     
\title[Stability and Rigidity Results for Lie Algebras]{A Survey on Stability and Rigidity Results for Lie Algebras*}
\thanks{*Research supported by the ERC Starting Grant No. 279729.}
\author{Marius Crainc}
\address{Depart. of Math., Utrecht University, 3508 TA Utrecht, The Netherlands}
\email{m.crainic@uu.nl}
\author{Florian Sch\"atz}
\address{Depart. of Math., Utrecht University, 3508 TA Utrecht, The Netherlands}
\email{florian.schaetz@gmail.com}
\author{Ivan Struchiner}
\address{Depart. of Math., University of S\~ao Paulo, Cidade Universit\'aria, S\~ao Paulo, Brasil}
\email{ivanstru@ime.usp.br}
\date{}
\begin{document}

\maketitle

\begin{abstract}
We give simple and unified proofs of the known stability and rigidity results for Lie algebras, Lie subalgebras and Lie algebra homomorphisms. Moreover,
we investigate when a Lie algebra homomorphism is stable under all automorphisms of the codomain (including outer automorphisms).
\end{abstract}

\section{Introduction}
In these notes we address the following stability/rigidity problems for Lie algebras:

\begin{problemz}[Rigidity of Lie algebras]\rm
\label{problem: rigidity}
Given a Lie bracket $\mu$ on a vector space $\gg$, when is it true that every Lie bracket $\mu'$ sufficiently close to $\mu$ is of the form $\mu' = A\cdot\mu$ for some $A \in \GL(\gg)$ close to the identity?
\end{problemz}

In the problem above, $(A\cdot\mu)(u,v) := A\mu(A^{-1}u, A^{-1}v)$. A Lie algebra which satisfies the condition above will be called \textbf{rigid}.

\begin{problemz}[Rigidity of Lie algebra homomorphisms]\label{problem rigidity homomorphism}\rm
Given a Lie algebra homomorphism $\rho: \hh \to \gg$, when is it true that every Lie algebra homomorphism $\rho':\hh \to\gg$ sufficiently close to $\rho$ is of the form $\rho' = \Ad_g\circ \rho$ for some $g \in G$ close to the identity?
\end{problemz}

A Lie algebra homorphism satisfying the condition above will be called a \textbf{rigid homomorphism}

\begin{problemz}[Rigidity of Lie subalgebras]\label{problem rigidity subalgebra}\rm
Given a Lie subalgebra $\hh \subset \gg$, when is it true that every subalgebra $\hh'\subset \gg$ sufficiently close to $\hh$ is of the form $\hh' = \Ad_g(\hh)$ for some $g \in G$ close to the identity?
\end{problemz}

A subalgebra satisfying the condition above will be called a \textbf{rigid subalgebra}.

\begin{problemz}[Stability of Lie algebra homomorphisms]\label{problem: stability homomorphisms}\rm
Given a Lie algebra homomorphism $\rho: \hh \to \gg$, when is it true that for every Lie algebra $\gg'$ sufficiently close to $\gg$, there exists a homomorphism $\rho':\hh \to\gg'$ close to $\rho$?
\end{problemz}

A homomorphism satisfying the condition above will be called \textbf{stable}.

\begin{problemz}[Stability of Lie subalgebras]\label{problem: stable subalgebra}\rm
Given a Lie subalgebra $\hh \subset \gg$, when is it true that for every Lie algebra $\gg'$ sufficiently close to $\gg$, there exists a Lie subalgebra $\hh' \subset\gg'$ close to $\hh$?
\end{problemz}

A subalgebra satisfying the condition above will be called a \textbf{stable subalgebra}
\\

As a consequence of the formalism used to address these stability problems, we will give an answer also to the following problem:

\begin{problemz}\rm \label{smoothness bracket}
Given a Lie algebra $\gg$, when is a neighborhood of $\gg$ smooth in the space of all Lie algebra structures?
\end{problemz}

Answers to these problems are given in \cite{RN1, RN2, RN3, RN4, RN5}. Our solutions rely on
the following analytic tools:
\begin{enumerate}
 \item A version
of the implicit function theorem (Proposition \ref{thm: IFT}), which guarantees that an orbit of a group action is locally open.
\item A stability result for the zeros of a vector bundle section (Proposition \ref{thm: IFT2}).
\item Kuranishi's description of zero sets.
\end{enumerate}

Our approach allows us to answer Problems 1 to 6 in a simple and unified manner. Moreover the solution of an extension of Problem 4 (Theorem \ref{corol: stable by Aut}) is not present in these papers. In forthcoming work of the authors, infinite-dimensional versions of tool 1. and 2. -- which are Proposition \ref{thm: IFT} and
Proposition \ref{thm: IFT2}, respectively -- will be used to prove stability/rigidity results in the context of Lie algebroids.
\\

As a preparation for answering the problems mentioned above, we first review the infinitesimal deformation theory of Lie algebras, Lie subalgebras, and Lie algebra homomorphisms, respectively.

\section{Lie algebra cohomology}

Infinitesimally, rigidity and stability problems translate into linear algebra problems, which take place in chain complexes associated to Lie algebras and their homorphisms. We briefly recall the construction of these complexes.

Given a Lie algebra $\gg$ and a representation $r:\gg \to \ggl(V)$, one obtains a chain complex called the \textbf{Chevalley-Eilenberg complex of $\gg$ with coefficients in $V$} as follows:
the cochains $C^k(\gg,V)$ of degree $k$ are given by $\hom(\wedge^k\gg, V)$, with differential $\delta_r: C^k(\gg,V) \to C^{k+1}(\gg,V)$ given by
\begin{align*}\delta_r\omega(u_0, \ldots, u_k) := \sum_{i=0}^k (-1)&^ir(u_i)\cdot \omega(u_0, \ldots, \widehat{u_i},\ldots u_k) \\ 
&+ \sum_{i<j}(-1)^{i+j}\omega([u_i,u_j], u_0, \ldots, \widehat{u_i},\ldots, \widehat{u_j}\ldots u_k).
\end{align*}

Whenever necessary to avoid confusion, we will denote a Lie algebra by a pair $(\gg, \mu)$, where $\mu: \wedge^2\gg \to \gg$ denotes the bracket $[\text{ },\text{ }]$ on $\gg$. Also, the differential described above will be denoted by $\delta_{\mu,r}$, when necessary.
For any Lie bracket $\mu$ and representation $r$, the differential satisfies $\delta^2 = 0$ and the resulting cohomology is denoted $H^k(\gg,V)$ (or $H^k_{\mu,r}(\gg,V)$, whenever necessary).

Note that the expression for $\delta_r$ makes sense even when $r:\gg \to \ggl(V)$ is merely a linear map (not necessarily a representation), and when $\mu$ is an arbitrary skew-symmetric bilinear map on $\gg$ to itself (not necessarily satisfying Jacobi identity). In this case, eventhough $\delta^2 \neq 0$, we will continue to denote the corresponding maps by $\delta_{\mu,r}$.

There are three examples that will be extremely important in our treatment of the rigidity/stability problems stated above:

\begin{example}\rm\label{ex: ad}
When $V = \gg$ equipped with the adjoint representation, the differential depends only on the Lie bracket of $\gg$. In this case the differential will be denoted by $\delta = \delta_{\mu}$ whenever there is no risk of confusion, and the cohomology will be denoted by $H^k(\gg,\gg)$.
\end{example}

\begin{example}\rm\label{ex: ad pull-back}
When $\rho: \hh \to \gg$ is a homorphism of Lie algebras, $r = \ad_{\gg}\circ \rho: \hh \to \ggl(\gg)$ is a representation of $\hh$. In this case, the differential will be denoted by $\delta_{\rho} = \delta_{\mu, \rho}$ and the cohomology by $H^k(\hh,\gg)$. 
\end{example}

\begin{example}\rm\label{ex: quotient}
When $\hh \subset \gg$ is a Lie subalgebra, the adjoint representation of $\gg$ induces a representation of $\hh$ on $\gg/\hh$. The corresponding differential will be denoted by $\delta_{\hh} = \delta_{\mu, \hh}$, and the resulting cohomology by $H^k(\hh, \gg/\hh)$.
\end{example}

\section{Deformation Theory: algebraic aspects}

In this section we show the relevance of the cohomologies introduced in Examples \ref{ex: ad}, \ref{ex: ad pull-back}, and \ref{ex: quotient} to the study of deformation problems.

\subsection{Deformations of Lie Brackets}\label{subsection:algebra_deformations_of_brackets}
We begin with the cohomology of $\gg$ with values in its adjoint representation, and its relation to the deformations of the Lie bracket $\mu = [\text{ },\text{ }]$ on the vector space underlying $\gg$.

\begin{definition}
A {\em deformation} of $(\gg, \mu)$ is a smooth one parameter family of Lie brackets $\mu_t$ on $\gg$ such that $\mu_0 = \mu$.

Two deformations $\mu_t$ and $\mu'_t$ are \textbf{equivalent} if there exists a smooth family of Lie algebra isomorphisms 
$\varphi_t: (\gg, \mu_t) \to (\gg, \mu'_t)$
such that $\varphi_0 = \mathrm{id}$.
\end{definition}

\begin{prop}
Let $\mu_t$ be a deformation of $(\gg, \mu)$. Then $\dot{\mu}_0 = \frac{\d}{\d t}|_{t=0}\mu_t$ is a cocycle in $C^2(\gg,\gg)$. Moreover, if $m'_t$ is another deformation which is equivalent to $\mu_t$, then $[\dot{\mu}_0] = [\dot{\mu}'_0]$ in $H^2(\gg,\gg)$.  
\end{prop}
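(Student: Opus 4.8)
The plan is to reduce both assertions to direct computations with the Chevalley--Eilenberg differential $\delta_\mu$ of Example \ref{ex: ad} (the case $r = \ad$, $V = \gg$).

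For the cocycle claim, I would start from the fact that each $\mu_t$ satisfies the Jacobi identity, written as the identity
\[
\mu_t(\mu_t(u,v),w) + \mu_t(\mu_t(v,w),u) + \mu_t(\mu_t(w,u),v) = 0
\]
for all $u,v,w \in \gg$ and all $t$. Since $t \mapsto \mu_t$ is smooth and each $\mu_t$ is bilinear, I may differentiate this identity at $t = 0$ by the Leibniz rule; writing $\nu := \dot{\mu}_0$ and $\mu = [\text{ },\text{ }]$, this produces six terms, three of the form $\nu(\mu(-,-),-)$ and three of the form $\mu(\nu(-,-),-)$. Reordering arguments using skew-symmetry of $\mu$ and $\nu$, one checks that this sum equals $-\delta_\mu\nu(u,v,w)$ for the differential of Example \ref{ex: ad} applied to $\nu \in C^2(\gg,\gg)$. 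As the left-hand side vanishes identically in $t$, its derivative is zero, so $\delta_\mu\nu = 0$, i.e. $\dot{\mu}_0$ is a $2$-cocycle.

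For the second claim, let $\varphi_t \colon (\gg,\mu_t) \to (\gg,\mu'_t)$ be a smooth family of Lie algebra isomorphisms with $\varphi_0 = \mathrm{id}$, so that $\varphi_t(\mu_t(u,v)) = \mu'_t(\varphi_t u, \varphi_t v)$. Differentiating at $t = 0$ and setting $X := \dot{\varphi}_0 \in \End(\gg)$, regarded as a $1$-cochain in $C^1(\gg,\gg)$, the contributions of $\varphi_0 = \mathrm{id}$ simplify and I obtain
\[
X(\mu(u,v)) + \dot{\mu}_0(u,v) = \dot{\mu}'_0(u,v) + \mu(Xu,v) + \mu(u,Xv),
\]
that is, $\dot{\mu}'_0 - \dot{\mu}_0 = -\delta_\mu X$, by the explicit formula for $\delta_\mu$ on $C^1(\gg,\gg)$. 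Hence $\dot{\mu}'_0$ and $\dot{\mu}_0$ differ by a coboundary, so $[\dot{\mu}_0] = [\dot{\mu}'_0]$ in $H^2(\gg,\gg)$.

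The only real work is the sign and index bookkeeping needed to match the expanded derivatives against the two relevant instances ($k = 1$ and $k = 2$) of the formula for $\delta_r$ with $r = \ad$; there is no analytic subtlety, since everything is finite-dimensional and, after differentiation, polynomial in the relevant data. I expect that recognition step --- identifying $-\delta_\mu\nu$ and $-\delta_\mu X$ in the raw expressions --- to be the main, if routine, obstacle.
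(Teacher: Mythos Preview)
Your proposal is correct and follows essentially the same approach as the paper: differentiate the Jacobi identity $\mathcal{J}(\mu_t)=0$ at $t=0$ to obtain the cocycle condition, and differentiate the equivalence relation $\varphi_t(\mu_t(u,v))=\mu'_t(\varphi_t u,\varphi_t v)$ at $t=0$ to see that $\dot{\mu}_0-\dot{\mu}'_0$ is a coboundary. The paper's proof is slightly terser in that it packages the Jacobi identity as $\mathcal{J}(\mu_t)=0$ rather than writing out the cyclic sum, but the content is identical.
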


\begin{proof}
Let $\mathcal{J}: \wedge^2\gg^{\ast}\otimes\gg \to \wedge^3\gg^{\ast}\otimes\gg$ denote the Jacobiator
\[\mathcal{J}(\eta)(u,v,w) = \eta(\eta(u,v),w) + \text{ cyclic permutations}.\]
Then, for all $t$ one has that
\[\mathcal{J}(\mu_t) = 0.\]
Differentiating this expressions at $t = 0$ on both sides we obtain
\[\dot{\mu}_0([u,v],w) + [\dot{\mu}_0(u,v),w] + \text{ cyclic permutations } = 0,\]
or in other words, $\delta_\mu(\dot{\mu}_0) = 0$.

Now, assume that $\mu'_t$ is another deformation which is equivalent to $\mu_t$, and let $\varphi_t \in \mathcal{C}^\infty([0,1],\GL(\gg))$ denote the equivalence. We differentiate both sides of
\[\varphi_t(\mu_t(u,v)) =\mu'_t(\varphi_t(u),\varphi_t(v)) \]
at $t=0$ to obtain
\[\dot{\mu}_0(u,v) - \dot{\mu}'_0(u,v) = [u, \dot{\varphi}_0(v)] + [\dot{\varphi}_0(u),v] - \dot{\varphi}_0[u,v].\]
This shows that $\dot{\mu}_0 - \dot{\mu}'_0 = \delta_{\mu} \dot{\varphi}_0$.
\end{proof}

\begin{remark} \rm
One should interpret the previous proposition as stating that, formally, $H^2(\gg,\gg)$ should be identified with the tangent space at $[\mu]$ to the space of Lie brackets on $\gg$ modulo the natural action of $\GL(\gg)$.  In Theorem \ref{corol:  rigidity bracket} we will prove a partial converse to the proposition above, which states that if $H^2(\gg,\gg) = 0$, then $[\mu]$ is an isolated point in the moduli space of Lie brackets on $\gg$.
\end{remark}

In view of the previous remark, a natural question which arises is under which conditions a cocycle $\xi \in C^2(\gg,\gg)$ determines a deformation of $\mu$. Let us denote by $Z^k(\gg,\gg) = \ker \delta_{\mu}$ the set of closed elements of $C^k(\gg,\gg)$, and by $B^k(\gg,\gg)$ the set of those elements, which are coboundaries. We consider the Kuranishi map
\[\Phi: Z^2(\gg,\gg) \to H^3(\gg,\gg), \quad \Phi(\eta) = \mathcal{J}(\eta) \mod B^3(\gg,\gg). \] 

\begin{prop}
If there exists a deformation $\mu_t$ of $\gg$ such that $\dot{\mu}_0 = \xi$, then $\Phi(\xi) = 0$.
\end{prop}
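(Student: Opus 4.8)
The plan is to read off the vanishing of $\Phi(\xi)$ from the second-order term in the Taylor expansion of the identity $\mathcal{J}(\mu_t) = 0$, in the same spirit in which the cocycle condition $\delta_\mu\xi = 0$ was obtained from its first-order term in the proof that $\dot\mu_0$ is a cocycle.

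First I would record the elementary observation that the Jacobiator $\mathcal{J}\colon \wedge^2\gg^\ast\otimes\gg \to \wedge^3\gg^\ast\otimes\gg$ is a homogeneous quadratic polynomial map: writing
\[
B(\eta,\zeta)(u,v,w) := \eta(\zeta(u,v),w) + \zeta(\eta(u,v),w) + \text{ cyclic permutations}
\]
for the associated symmetric bilinear map, one has $\mathcal{J}(\eta + \zeta) = \mathcal{J}(\eta) + B(\eta,\zeta) + \mathcal{J}(\zeta)$ for all skew-symmetric bilinear maps $\eta,\zeta$. The crucial point is that $B(\mu,\zeta) = \delta_\mu\zeta$ for every $\zeta \in C^2(\gg,\gg)$; indeed $B(\mu,\zeta)(u,v,w) = \zeta([u,v],w) + [\zeta(u,v),w] + \text{cyclic}$, which is precisely the expression identified with $\delta_\mu\zeta(u,v,w)$ in the proof of the first proposition of this subsection.

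Next I would substitute the Taylor expansion $\mu_t = \mu + t\,\xi + \tfrac{t^2}{2}\,\ddot\mu_0 + O(t^3)$, with $\ddot\mu_0 \in C^2(\gg,\gg)$, into $\mathcal{J}(\mu_t) = 0$ and collect powers of $t$, using the displayed identity together with $\mathcal{J}(\mu) = 0$ and the homogeneity of $\mathcal{J}$. The coefficient of $t^0$ is the Jacobi identity for $\mu$; the coefficient of $t^1$ is $B(\mu,\xi) = \delta_\mu\xi$, recovering that $\xi$ is a cocycle; and the coefficient of $t^2$ is
\[
\mathcal{J}(\xi) + \tfrac12\,B(\mu,\ddot\mu_0) \;=\; \mathcal{J}(\xi) + \tfrac12\,\delta_\mu\ddot\mu_0 \;=\; 0 .
\]
Therefore $\mathcal{J}(\xi) = -\tfrac12\,\delta_\mu\ddot\mu_0 \in B^3(\gg,\gg)$, which is exactly the assertion $\Phi(\xi) = \mathcal{J}(\xi) \bmod B^3(\gg,\gg) = 0$.

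I do not expect any genuine obstacle: the only points needing care are the bookkeeping in the quadratic expansion of $\mathcal{J}$ and the normalization in the identity $B(\mu,-) = \delta_\mu$, both routine. Equivalently, and perhaps more transparently, one can avoid the expansion and simply differentiate $\mathcal{J}(\mu_t) \equiv 0$ twice at $t = 0$, using $D\mathcal{J}(\mu)(\zeta) = \delta_\mu\zeta$ and $D^2\mathcal{J}(\eta)(\zeta,\zeta) = 2\,\mathcal{J}(\zeta)$, to obtain $2\,\mathcal{J}(\xi) + \delta_\mu\ddot\mu_0 = 0$ and conclude as before.
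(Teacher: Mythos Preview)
Your argument is correct and is essentially the same as the paper's: expand $\mu_t$ to second order, use that $\mathcal{J}$ is homogeneous quadratic, and read off from the $t^2$-coefficient that $\mathcal{J}(\xi)$ is a $\delta_\mu$-coboundary. Your explicit introduction of the polar form $B$ and the identification $B(\mu,-)=\delta_\mu$ just makes transparent what the paper does in one line, and your factor $\tfrac12$ in front of $\delta_\mu\ddot\mu_0$ is in fact the correct bookkeeping (harmless either way for the conclusion).
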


\begin{proof}
Consider the Taylor expansion
\[\mu_t = \mu + t\xi + \frac{1}{2}t^2\eta + o(t^3)\]
of $\mu_t$ around $t = 0$.

Since $\mathcal{J}(\mu_t) = 0$, we obtain
\[0 = \mathcal{J}(\mu_t) = \mathcal{J}(\mu) + t \delta_{\mu}\xi + t^2(\mathcal{J}(\xi) + \delta_{\mu}\eta) + o(t^3), \]
and it follows that $\mathcal{J}(\xi) = - \delta_{\mu}\eta$.
\end{proof}

\begin{remark} \rm
As a consequence of Theorem \ref{corol: regular bracket}, it follows that if $H^3(\gg,\gg)$ vanishes, then every $\xi \in Z^2(\gg,\gg)$ does indeed give rise to a deformation of $\gg$.
\end{remark}

\subsection{Deformations of Homomorphisms}\label{subsection:algebra_homomorphisms}

We now describe the formal deformation theory of Lie algebra homomorphisms. Let $\rho: \hh \to \gg$ be a homomorphism.

\begin{definition}
A \textbf{deformation} of $\rho$ is a smooth family $\rho_t: \hh \to \gg$ of Lie algebra homomorphisms such that $\rho_0 = \rho$.

Two deformations $\rho_t$ and $\rho'_t$ are \textbf{equivalent} if there exists a smooth curve $g_t$ starting at the identity in $G$, such that $\rho'_t = \Ad_{g_t}\circ \rho_t$. Here, $G$ denotes the unique simply connected Lie group which integrates $\gg$.
\end{definition}

\begin{prop}
If $\rho_t$ is a deformation of $\rho$ then $\dot{\rho}_0 = \frac{\d}{\d t}|_{t=0}\rho_t$ is a cocycle in $C^1(\hh,\gg)$. Moreover, if $\rho'_t$ is equivalent to $\rho_t$, then $[\dot{\rho}_0] = [\dot{\rho}'_0]$ in $H^1(\hh,\gg)$.
\end{prop}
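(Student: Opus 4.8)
The plan is to follow the same two-step pattern as in the proof for deformations of Lie brackets, now using the Chevalley--Eilenberg differential $\delta_\rho$ attached to the representation $r = \ad_\gg \circ \rho$ of $\hh$ on $\gg$ (Example \ref{ex: ad pull-back}). For the first claim, I would differentiate the homomorphism condition
\[
\rho_t([u,v]) = [\rho_t(u), \rho_t(v)]
\]
at $t=0$, obtaining
\[
\dot\rho_0([u,v]) = [\dot\rho_0(u), \rho(v)] + [\rho(u), \dot\rho_0(v)].
\]
On the other hand, the degree-one part of the differential reads
\[
\delta_\rho(\dot\rho_0)(u,v) = [\rho(u), \dot\rho_0(v)] - [\rho(v), \dot\rho_0(u)] - \dot\rho_0([u,v]),
\]
and substituting the differentiated identity into the last term shows that the right-hand side vanishes. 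Hence $\dot\rho_0$ is a cocycle in $C^1(\hh,\gg)$.

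For the second claim, write the equivalence as $\rho'_t = \Ad_{g_t}\circ \rho_t$ with $g_0 = e$, and set $X := \dot g_0 \in \gg \cong T_eG$. Differentiating at $t=0$ by the Leibniz rule gives
\[
\dot\rho'_0(u) = \Big(\frac{\d}{\d t}\Big|_{t=0}\Ad_{g_t}\Big)(\rho(u)) + \dot\rho_0(u).
\]
The key input is the standard fact that the derivative at the identity of $\Ad\colon G \to \GL(\gg)$ is $\ad\colon \gg \to \ggl(\gg)$, so that $\frac{\d}{\d t}|_{t=0}\Ad_{g_t} = \ad_X$. Therefore
\[
\dot\rho'_0(u) - \dot\rho_0(u) = [X, \rho(u)] = -[\rho(u), X].
\]
Finally, viewing $X$ as an element of $C^0(\hh,\gg) = \gg$, the degree-zero part of the differential is $\delta_\rho(X)(u) = [\rho(u), X]$, so $\dot\rho'_0 - \dot\rho_0 = -\delta_\rho(X)$ is a coboundary, and thus $[\dot\rho_0] = [\dot\rho'_0]$ in $H^1(\hh,\gg)$.

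All the computations here are elementary differentiations; I do not expect a genuine obstacle. The only points requiring care are the identification of $\frac{\d}{\d t}|_{t=0}\Ad_{g_t}$ with $\ad_X$ — which is exactly the differential of the adjoint representation of the simply connected group $G$ integrating $\gg$, hence needs no separate argument — and the correct bookkeeping of signs in the low-degree instances of the formula for $\delta_\rho$ on $0$- and $1$-cochains, which must be read off carefully from the general expression for $\delta_r$ given above.
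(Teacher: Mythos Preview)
Your proposal is correct and follows essentially the same approach as the paper: differentiate the homomorphism condition to obtain the cocycle equation, and differentiate the equivalence $\rho'_t = \Ad_{g_t}\circ\rho_t$ using $\frac{\d}{\d t}|_{t=0}\Ad_{g_t} = \ad_{\dot g_0}$ to see that the difference of the infinitesimal deformations is $\delta_\rho$ of $\dot g_0$. The only difference is that you spell out the low-degree instances of $\delta_\rho$ and track the signs explicitly, which the paper leaves implicit.
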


\begin{proof}
We differentiate both sides of
\[\rho_t([u,v]) = [\rho_t(u),\rho_t(v)]\]
at $t=0$ to obtain
\[\dot{\rho}_0([u,v]) = [u, \dot{\rho}_0(v)] + [\dot{\rho}_0(u),v].\]
It follows that $\delta_{\rho}\dot{\rho}_0 = 0$.

Now, assume that $\rho'_t(u) = \Ad_{g_t}\circ \rho_t (u)$ for all $u \in \hh$. Differentiating both sides at $t=0$ we obtain
\[\dot{\rho}'_0(u) = \dot{\rho}_0(u) + [\dot{g}_0, \rho(u)],\]
or in other words $\dot{\rho}_0 - \dot{\rho}'_0 = \delta_{\rho}\dot{g}_0$.
\end{proof}

\begin{remark} \rm
Again we interpret this proposition as stating that, formally, $H^1(\hh,\gg)$ can be identified with the tangent space at $[\rho]$ to the space of all Lie algebra homomorphisms from $\hh$ to $\gg$, modulo the adjoint action of $G$. This should be compared to the statement of Theorem \ref{corol: rigidity homomorphism}.
\end{remark}

This remark suggests the following problem: When is a cocycle $\xi \in C^1(\hh,\gg)$ tangent to a deformation of $\rho$? To obtain a partial answer to this question we consider the Kuranishi map
\[\Phi: Z^1(\hh,\gg) \to H^2(\hh,\gg), \quad \Phi(\xi) = \frac{1}{2}[\xi,\xi] \mod B^2(\hh,\gg),\]
where $\frac{1}{2}[\xi,\xi](u,v) = [\xi(u),\xi(v)]_{\gg}$.

\begin{prop}
If there exists a deformation $\rho_t$ of $\rho$ such that $\dot{\rho}_0 = \xi$, then $\Phi(\xi) = 0$.
\end{prop}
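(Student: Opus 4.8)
The plan is to mimic exactly the argument used a few lines above for deformations of the bracket: write down the second-order Taylor expansion of $\rho_t$, plug it into the identity expressing that $\rho_t$ is a homomorphism, and extract the coefficient of $t^2$.

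Concretely, I would expand
\[
\rho_t = \rho + t\xi + \tfrac{1}{2}t^2\eta + o(t^3)
\]
for some smooth family with second derivative $\eta = \ddot{\rho}_0 \in C^1(\hh,\gg)$, and substitute into
\[
\rho_t([u,v]) = [\rho_t(u),\rho_t(v)]_{\gg}
\]
which holds for all $t$ and all $u,v \in \hh$. The left-hand side contributes $\tfrac12\eta([u,v])$ at order $t^2$. On the right-hand side, expanding the bracket by bilinearity, the order-$t^2$ term is
\[
\tfrac12\big([\rho(u),\eta(v)]_{\gg} + [\eta(u),\rho(v)]_{\gg}\big) + [\xi(u),\xi(v)]_{\gg}.
\]
Equating the two gives, for all $u,v$,
\[
[\xi(u),\xi(v)]_{\gg} = \tfrac12\Big(\eta([u,v]) - [\rho(u),\eta(v)]_{\gg} - [\eta(u),\rho(v)]_{\gg}\Big).
\]

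The final step is to recognize the right-hand side as a coboundary. Unwinding the definition of $\delta_\rho = \delta_{\mu,\rho}$ with $r = \ad_{\gg}\circ\rho$, one has
\[
\delta_\rho\eta(u,v) = [\rho(u),\eta(v)]_{\gg} - [\rho(v),\eta(u)]_{\gg} - \eta([u,v]) = [\rho(u),\eta(v)]_{\gg} + [\eta(u),\rho(v)]_{\gg} - \eta([u,v]),
\]
so the displayed identity reads $\tfrac12[\xi,\xi] = -\tfrac12\,\delta_\rho\eta$, i.e. $[\xi,\xi] = -\delta_\rho\eta \in B^2(\hh,\gg)$. Hence $\Phi(\xi) = \tfrac12[\xi,\xi] \bmod B^2(\hh,\gg) = 0$, which is the claim. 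The only thing that requires care — and the place where a sign error would most easily creep in — is matching the bookkeeping of the skew-symmetrization in $\delta_\rho$ with the definition $\tfrac12[\xi,\xi](u,v) = [\xi(u),\xi(v)]_{\gg}$; everything else is a routine order-by-order expansion, exactly parallel to the bracket case.
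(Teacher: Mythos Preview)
Your proof is correct and follows essentially the same approach as the paper's: Taylor-expand $\rho_t$ to second order, substitute into the homomorphism identity, and read off the $t^2$-coefficient to see that $\tfrac12[\xi,\xi]$ is a $\delta_\rho$-coboundary. The only cosmetic difference is that the paper writes the expansion as $\rho_t = \rho + t\xi + t^2\eta + o(t^3)$ (without the $\tfrac12$), so its conclusion reads $\tfrac12[\xi,\xi] = -\delta_\rho\eta$ rather than $-\tfrac12\delta_\rho\eta$; either way the class vanishes.
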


\begin{proof}
Consider the Taylor expansion
\[\rho_t = \rho + t\xi +t^2\eta + o(t^3).\]

It follows from $[\rho_t(u),\rho_t(v)] - \rho_t([u,v]) = 0$ that
\begin{align*}0 = [&\rho(u),\rho(v)] - \rho([u,v]) + t\left([\xi(u),\rho(v)] + [\rho(u),\xi(v)] - \xi([u,v])\right) +\\
&+ t^2\left([\rho(u),\eta(v)] + [\eta(u), \rho(v)] +[\xi(u),\xi(v)] - \eta([u,v])\right) + o(t^3)
\end{align*}
and thus, in particular, $\frac{1}{2}[\xi,\xi] = -\delta_{\rho}\eta$.
\end{proof}

\begin{remark} \rm
As a consequence of Theorem \ref{corol: stability homomorphism} it will follow that if $H^2(\hh,\gg) = 0$, then every $\xi \in Z^1(\hh,\gg)$ is indeed tangent to a deformation of $\rho$.
\end{remark}

\subsection{Deformations of Subalgebras}\label{subsection:algebra_subalgebras}

Finally, we describe the infinitesimal deformation theory for Lie subalgebras.

Let $\hh$ be a $k$-dimensional subalgebra of $\gg$ and denote by $\mathrm{Gr}_k(\gg)$ the Grassmannian manifold of $k$-dimensional subspaces of $\gg$.

\begin{definition}
A \textbf{deformation of $\hh$ inside $\gg$} is a smooth curve $\hh_t \in \mathcal{C}^{\infty}([0,1],\mathrm{Gr}_k(\gg))$ such that $\hh_0 = \hh$, and $\hh_t$ is a Lie subalgebra of $\gg$ for all $t$.

Two deformations $\hh_t$ and $\hh'_t$ of $\hh$ inside $\gg$ are said to be \textbf{equivalent} if there exists a smooth curve $g_t$ starting at the identity in $G$ and such that $\hh'_t = \Ad_{g_t}\hh_t$. 
\end{definition}

\begin{prop}
If $\hh_t$ is a deformation of $\hh$ inside of $\gg$, then $\dot{\hh}_0 = \frac{\d}{ \d t}|_{t=0}\hh_t$ is a cocycle in $C^1(\hh, \gg/\hh) = \hh^{\ast}\otimes\gg/\hh$. Moreover, if $\hh_t$ is equivalent to $\hh'_t$, then $[\dot{\hh}_0] = [\dot{\hh}'_0]$ in $H^1(\hh, \gg/\hh)$.  
\end{prop}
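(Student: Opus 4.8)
The plan is to linearize the subalgebra condition along $\hh_t$, exactly as was done above for brackets and homomorphisms; the one preliminary point is to make $\dot{\hh}_0$ into an honest element of $\hom(\hh,\gg/\hh)$. Recall that $T_\hh\mathrm{Gr}_k(\gg)\cong\hom(\hh,\gg/\hh)$. Concretely, for $t$ near $0$ one can choose a smooth curve of injective linear maps $j_t\colon\hh\to\gg$ with $j_0=\iota$ the inclusion and $\Im(j_t)=\hh_t$ (subspaces close to $\hh$ are graphs over $\hh$ of linear maps into a fixed complement, so such $j_t$ exist), and then define $\dot{\hh}_0\in\hom(\hh,\gg/\hh)$ by $\dot{\hh}_0(u):=q(\dot j_0(u))$, where $q\colon\gg\to\gg/\hh$ is the quotient map. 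This does not depend on the chosen frame: two admissible frames differ by $\widetilde j_t=j_t\circ A_t$ with $A_t\in\GL(\hh)$ and $A_0=\mathrm{id}$, so $\dot{\widetilde j}_0-\dot j_0=\iota\circ\dot A_0$ takes values in $\hh$ and is annihilated by $q$.

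For the cocycle claim I would write the condition ``$\hh_t$ is a subalgebra'' as: for all $u,v\in\hh$ there exists $c_t(u,v)\in\hh$ with $[j_t(u),j_t(v)]=j_t(c_t(u,v))$, and necessarily $c_0(u,v)=[u,v]$. Differentiating at $t=0$ gives
\[
[\dot j_0(u),v]+[u,\dot j_0(v)]=\dot j_0([u,v])+\dot c_0(u,v)
\]
with $\dot c_0(u,v)\in\hh$. Applying $q$ kills the two $\hh$-valued terms, and since the induced $\hh$-action on $\gg/\hh$ is $u\cdot q(w)=q([u,w])$ (Example \ref{ex: quotient}) we obtain $u\cdot\dot{\hh}_0(v)-v\cdot\dot{\hh}_0(u)=\dot{\hh}_0([u,v])$, i.e.\ $\delta_\hh(\dot{\hh}_0)=0$.

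For the equivalence claim, suppose $\hh'_t=\Ad_{g_t}\hh_t$ with $g_0=e$. I would use the frame $j'_t:=\Ad_{g_t}\circ j_t$ for $\hh'_t$, which again satisfies $j'_0=\iota$. The product rule together with $\frac{\d}{\d t}|_{t=0}\Ad_{g_t}=\ad_{\dot g_0}$ gives $\dot j'_0(u)=[\dot g_0,u]+\dot j_0(u)$, hence
\[
\dot{\hh}'_0(u)-\dot{\hh}_0(u)=q([\dot g_0,u])=-u\cdot q(\dot g_0)=-\delta_\hh\bigl(q(\dot g_0)\bigr)(u),
\]
so $\dot{\hh}'_0-\dot{\hh}_0=-\delta_\hh(\overline{\dot g_0})$ with $\overline{\dot g_0}=q(\dot g_0)\in C^0(\hh,\gg/\hh)$. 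The sign is opposite to that in the analogous statements for brackets and homomorphisms, but this is immaterial, and we conclude $[\dot{\hh}_0]=[\dot{\hh}'_0]$ in $H^1(\hh,\gg/\hh)$.

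The only genuinely delicate step is the first one: setting up $\dot{\hh}_0$ as a well-defined cochain in $\hh^{\ast}\otimes\gg/\hh$ and checking independence of the auxiliary frame (equivalently, of the chosen complement $\gg=\hh\oplus\m$, if one prefers to realize $\hh_t$ as the graph of $\phi_t\colon\hh\to\m$ with $\phi_0=0$ and run the same two differentiations). Once that is in place, both assertions are immediate one-line linearizations followed by projection to $\gg/\hh$.
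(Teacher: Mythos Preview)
Your proof is correct and follows essentially the same strategy as the paper's: parametrize $\hh_t$ by a smooth family of linear maps into $\gg$, differentiate the subalgebra condition at $t=0$, and project to $\gg/\hh$. The only cosmetic difference is that the paper chooses a curve $a_t\in\GL(\gg)$ with $a_t(\hh)=\hh_t$ (so your $j_t$ is $a_t|_\hh$) and phrases the subalgebra condition as the vanishing of $a_t^{-1}[a_t(u),a_t(v)]\bmod\hh$ rather than via your structure map $c_t$; your extra care in checking frame-independence of $\dot{\hh}_0$ is a welcome addition.
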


\begin{remark} \rm
In the statement of the proposition above we have used the canonical identification of $T_{\hh}\mathrm{Gr}_k(\gg)$ with $\hh^{\ast}\otimes\gg/\hh$ which is obtained as follows: 
%
If $\hh_t$ is a curve in $\mathrm{Gr}_k(\gg)$ starting at $\hh$, we can find a curve $a_t$  in $\GL(\gg)$ starting at the identity and such that $\hh_t = a_t(\hh)$. Then $\dot{\hh}_0$ is represented by
\[\eta \in \hh^{\ast} \otimes \gg/\hh, \quad \eta(u) := \frac{\d}{\d t}\vert_{t=0}a_t(u) \mod \hh.\] 
\end{remark}

\begin{proof}
Let $\hh_t$ be a deformation of $\hh$ inside of $\gg$. As in the remark above, fix a curve $a_t$ in $\GL(\gg)$, starting at the identity, and such that $a_t(\hh) = \hh_t$. Denote by 
\[\bar{a}_t: \gg/\hh \longrightarrow \gg/\hh_t\]
the induced isomorphism. Then, since for each $t$ we have that $\hh_t$ is a Lie subalgebra, we have that
\[\sigma_t \in \wedge^2\hh^{\ast}\otimes\gg/\hh, \quad \sigma_t(u,v) := \bar{a}_t^{-1}([a_t(u),a_t(v)] \mod \hh_t)\]
vanishes identically for all $t \in \Rr$, and all $u,v \in \hh$. Note however, that by definition,
\[\sigma_t(u,v) = \bar{a}_t^{-1}([a_t(u),a_t(v)] \mod \hh_t) = (a_t^{-1}\circ[a_t(u),a_t(v)]) \mod \hh,\]
and thus, by differentiating at $t=0$ we obtain
\[(-\dot{a}_0[u,v] + [\dot{a}_0(u),v] + [u, \dot{a}_0(v)]) \mod \hh = 0.\]
This is just the cocycle condition for $\eta = \dot{a}_0 \mod \hh$.

Next, assume that $\hh'_t = \Ad_{g_t}\hh_t$. Then $a'_t = \Ad_{g_t} \circ a_t$ is a curve in $\GL(\gg)$ which maps $\hh$ to $\hh'_t$. By differentiating both sides of this expression an taking the quotient by $\hh$, we obtain that
\[\dot{a}'_0(u) \mod \hh = (\dot{a}_0(u) + [\al, u]) \mod \hh,\]
where $\al = \dot{g}_0 \in \gg$. This concludes the proof. 
\end{proof}

\begin{remark} \rm
Once more we interpret this proposition as stating that, formally, $H^1(\hh,\gg/\hh)$ can be identified with the tangent space at $[\hh]$ to the space of all Lie subalgebras of dimension $k$ in  $\gg$, modulo the adjoint action of $G$. This should be compared to the statement of Theorem \ref{corol: rigidity subalgebra}.
\end{remark}

This leads us to the following question: Given a cocycle $\xi \in C^1(\hh, \gg/\hh)$, does it induce a deformation of $\hh$ inside of $\gg$?

First, observe that the exact sequence
$$
\xymatrix{
\hh \ar[r] & \gg  \ar[r] & \gg/\hh
}
$$ 
induces a long exact sequence in cohomology
$$ 
\xymatrix{
\cdots \ar[r] & H^1(\hh,\hh) \ar[r] & H^1(\hh,\gg) \ar[r] & H^1(\hh,\gg/\hh) \ar[r] & H^2(\hh,\hh) \ar[r] & \cdots.
}
$$
The connecting homomorphism $H^1(\hh,\gg/\hh) \to H^2(\hh,\hh)$ measures how much an infinitesimal deformation of $\hh$ as a Lie subalgebra 
effects the Lie bracket that $\hh$ inherits from $\gg$. On the level of cocycles, the connecting homomorphism can be realized by choosing a splitting
$$\sigma: \gg/\hh \to \gg$$
and setting
$$ \Omega_{\sigma}: Z^1(\hh,\gg/\hh) \to Z^2(\hh,\hh), \quad \Omega_\sigma(\eta) := \delta( \sigma \circ \eta).$$
We remark that, a priori, $\Omega_\sigma(\eta)$ is a map from $\wedge^2\hh$ to $\gg$. But because $\eta$ is a cocycle, $\Omega_\sigma(\eta)$ is annihilated
by the projection $\gg \to \gg/\hh$, and hence takes values in $\hh$. Observe that, as an element of $\hom(\wedge^2\hh,\hh)$, $\Omega_\sigma(\eta)$ is closed, but not necessarily exact.

We now define
$$\Phi_\sigma: Z^1(\hh,\gg/\hh) \to Z^2(\hh,\gg/\hh)$$
by
$$ \Phi_\sigma(\eta)(u,v) := [(\sigma \circ \eta)(u),(\sigma \circ \eta)(v)] \textrm{ mod } \hh - \eta(\Omega_\sigma(\eta)(u,v)).$$
As above, $\sigma$ denotes a splitting of the short exact sequence $\hh \to \gg \to \gg/\hh$.
We leave it to the reader to check the following two facts:
\begin{enumerate}
\item $\Phi_\sigma(\eta)$ is indeed a cocycle.
\item If one chooses another splitting, say $\sigma'$, $\Phi_\sigma(\eta)$ and $\Phi_{\sigma'}(\eta)$ differ by
$\delta(\eta \circ \mu \circ \eta)$, where $\mu:=\sigma'-\sigma$. 
\end{enumerate}
Hence the Kuranishi map
$$ \Phi: Z^1(\hh,\gg/\hh) \to H^2(\hh,\gg/\hh), \quad \Phi(\eta):= [\Phi_\sigma(\eta)]$$
is well-defined.

\begin{prop}
If there exists a deformation $\hh_t$ of $\hh$ such that $\dot{\hh}_0 = \eta$, then $\Phi(\eta)=0$.
\end{prop}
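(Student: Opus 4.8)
The plan is to imitate the proofs of the previous two Kuranishi-type propositions (for brackets and for homomorphisms): take a deformation $\hh_t$ of $\hh$ with $\dot{\hh}_0 = \eta$, realize it concretely by a curve in $\GL(\gg)$, expand everything to second order in $t$, and read off that $\Phi_\sigma(\eta)$ is a coboundary. First I would invoke the remark preceding the cocycle proposition for subalgebras to fix a curve $a_t$ in $\GL(\gg)$ with $a_0 = \mathrm{id}$ and $a_t(\hh) = \hh_t$; by differentiating I may assume $\dot a_0 = \sigma \circ \eta$ after composing with a suitable curve fixing $\hh$ (i.e.\ choose the lift of $\dot{\hh}_0$ to be $\sigma \circ \eta$, where $\sigma\colon \gg/\hh \to \gg$ is the chosen splitting). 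Write the Taylor expansion $a_t = \mathrm{id} + t(\sigma\circ\eta) + \tfrac12 t^2 b + o(t^3)$ for some $b \in \End(\gg)$.

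Next I would use that $\hh_t = a_t(\hh)$ is a subalgebra for every $t$, i.e.\ the map $\sigma_t(u,v) := \bigl(a_t^{-1}[a_t(u),a_t(v)]\bigr) \bmod \hh$ (an element of $\wedge^2\hh^\ast \otimes \gg/\hh$, as in the cocycle proof) vanishes identically in $t$. Expanding $a_t^{-1} = \mathrm{id} - t(\sigma\circ\eta) + t^2\bigl((\sigma\circ\eta)^2 - \tfrac12 b\bigr) + o(t^3)$ and collecting: the order-$t^0$ term is the subalgebra condition for $\hh$ itself (zero mod $\hh$); the order-$t^1$ term gives the cocycle condition $\delta_{\hh}\eta = 0$, which we already know; the order-$t^2$ term is the interesting one. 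Carefully expanding, the coefficient of $t^2$ in $\sigma_t(u,v)$ will involve $[(\sigma\circ\eta)(u), (\sigma\circ\eta)(v)] \bmod \hh$, a term of the form $-(\sigma\circ\eta)\bigl(\text{something}\bigr) \bmod \hh$ coming from the $a_t^{-1}$ expansion acting on the first-order bracket terms, and a $\delta_{\hh}(\text{linear in }b)$ piece. The first two reassemble precisely into $\Phi_\sigma(\eta)(u,v)$ — this is where one must recognize that the "something" is $\Omega_\sigma(\eta)(u,v) = \delta(\sigma\circ\eta)(u,v)$ viewed in $\hh$, matching the definition $\Phi_\sigma(\eta)(u,v) = [(\sigma\circ\eta)(u),(\sigma\circ\eta)(v)] \bmod \hh - \eta(\Omega_\sigma(\eta)(u,v))$ — and the remaining $b$-dependent piece is $\delta_{\hh}$ of $\bar b := (b \bmod \hh)\colon \gg \to \gg/\hh$ restricted appropriately, so that vanishing of the total $t^2$-coefficient yields $\Phi_\sigma(\eta) = -\delta_{\hh}(\text{that coboundary})$, hence $\Phi(\eta) = [\Phi_\sigma(\eta)] = 0$ in $H^2(\hh, \gg/\hh)$.

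The main obstacle I anticipate is the bookkeeping in the second-order expansion of $\sigma_t$: there are three sources of $t^2$ terms (the $b$-term in $a_t$, the quadratic correction in $a_t^{-1}$, and the cross term between the two first-order factors in the bracket $[a_t(u), a_t(v)]$), and one must track how each interacts with reduction mod $\hh$ versus mod $\hh_t$. The subtle point is that $\sigma_t$ is defined using $\bmod\,\hh_t$ but rewritten using $\bmod\,\hh$ via $\bar a_t^{-1}$, exactly as in the preceding cocycle proposition; keeping this rewriting consistent at second order is what makes the term $\eta(\Omega_\sigma(\eta)(u,v))$ appear (it comes from $(\sigma\circ\eta)$ hitting a $\hh$-valued quantity and landing in $\gg/\hh$ via $\eta$). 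I would organize the computation by first writing $a_t^{-1}[a_t(u),a_t(v)]$ to order $t^2$ with all terms in $\gg$, then projecting to $\gg/\hh$ at the very end, and finally matching against the definitions of $\Omega_\sigma$ and $\Phi_\sigma$; once the algebra is arranged this way the identification of the coboundary is essentially forced.
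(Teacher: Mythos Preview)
Your approach is correct in outline but takes a genuinely different route from the paper. The paper does \emph{not} lift $\hh_t$ to a curve $a_t$ in $\GL(\gg)$; instead it uses the graph chart $\psi_\sigma: \hom(\hh,\gg/\hh)\to \mathrm{Gr}_k(\gg)$ determined by the splitting $\sigma$, writes $\hh_t$ as the graph of $\sigma\circ\eta_t$ with $\eta_t \sim t\eta + t^2\rho + \cdots$, and expands the condition that $[x+\sigma\eta_t(x),\,y+\sigma\eta_t(y)]$ lies in this graph. The $t^2$-coefficient then directly gives $\Phi_\sigma(\eta) = -\delta_\hh\rho$ using the alternate formula for $\Phi_\sigma$. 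This is cleaner because all the data live in $\hom(\hh,\gg/\hh)$ from the start, so there is no redundancy to project away. Your approach, by contrast, faithfully extends the method of the preceding cocycle proposition (which \emph{did} use $a_t\in\GL(\gg)$) and hence is more uniform with what came before, but it carries extra baggage: $\dot a_0$ is an element of $\End(\gg)$, not just a map $\hh\to\gg$, and your statement ``$\dot a_0=\sigma\circ\eta$'' is only meaningful on $\hh$. For your second-order computation to come out as you describe (in particular for the $A^2[u,v]$ term to vanish and for $A([Au,v]+[u,Av])$ to reduce to $\sigma\circ\eta(\Omega_\sigma(\eta)(u,v))$), you must extend $\dot a_0$ by zero on the complement $\sigma(\gg/\hh)$; this should be said explicitly. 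Once that choice is made, your expansion recovers exactly the paper's conclusion, with your $\bar b$ playing the role of $2\rho$.
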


\begin{proof}
We fix a splitting $\sigma: \gg/\hh \to \gg$ as before. This choice yields a chart
$$\psi_\sigma: \mathrm{Hom}(\hh,\gg/\hh)\to \mathrm{Gr}_k(\gg)$$
around $\hh\in \mathrm{Gr}_k(\gg)$, which associates to each map $\eta: \hh \to \gg/\hh$ the graph of $\eta_\sigma=\sigma\circ \eta$.
Observe that the differential of $\psi_\sigma$ at zero is the identity (here we use the identification $T_\hh\mathrm{Gr}_k(\gg) \cong \mathrm{Hom}(\hh,\gg/\hh)$ explained above).

Suppose we have a deformation $\hh_t$ of $\hh$. Using the chart $\psi_\sigma$, $\hh_t$ yields
a family $\eta_t \in \mathcal{C}^\infty([0,1],\mathrm{Hom}(\hh,\gg/\hh))$, which we expand up to second order in $t$, i.e.
$$ \eta_t \sim t\eta + t^2 \rho + \cdots.$$
We know that the graph of $\eta_t$ is a Lie subalgebra for all $t$,
i.e. for arbitrary $x, y\in \hh$,
$$ [x + \eta_t(x),y + \eta_t(y)]$$
is again an element of the graph of $\eta_t$.
If one expands this condition in powers of $t$, one obtains the following requirements:
\begin{itemize}
 \item $t^0$: $\hh$ is a Lie subalgebra,
 \item $t^1$: $\eta$ is a cocycle of $\mathrm{Hom}(\hh,\gg/\hh)$,
 \item $t^2$: The cocycle $\Phi_\sigma(\eta): \wedge^2\hh \to \gg/\hh$ which represents the cohomology class $\Phi(\eta)$ is equal to $-\delta \rho$, hence $\Phi(\eta) = 0$.
\end{itemize}
In the last item, we used that $\Phi_\sigma(\eta)$ can also be written as
$$ \Phi_\sigma(\eta)(u,v) := [(\sigma \circ \eta)(u),(\sigma\circ \eta)(v)] - (\eta \circ \pi_{\hh}^{\sigma})\left([(\sigma\circ \eta)(u),v] + [u,(\sigma \circ \eta)(v)] \right),$$
where $\pi_{\hh}^\sigma$ is the projection $\gg \to \hh$ induced by the splitting $\sigma$, i.e. $\pi_{\hh}^{\sigma}= \mathrm{id} - \sigma \circ \pi_{\gg/\hh}$.

\end{proof}

\section{Analytic tools}

\subsection{Openess of orbits}

Let $E$ be a vector bundle over $M$. Suppose a Lie group $G$ acts on $E$ in a smooth fashion. We will always assume that
the action preserves the zero-section $Z: M\hookrightarrow E$. It follows that $M$ inherits a $G$-action.

\begin{definition}
A section $\sigma: M\to E$ is called {\em equivariant} if 
$$ \sigma(g\cdot x) = g\cdot \sigma(x)$$
holds for all $g\in G$ and $x \in M$.
\end{definition}

Observe that the zero section $Z: M\to E$ is always equivariant. Also, notice that the zero
set of any equivariant section is mapped into itself under the $G$-action.

\begin{definition}
Let $\sigma$ be an equivariant section of the vector bundle $E\to M$.
A zero $x\in M$ of $\sigma$ is called {\em non-degenerate} if the sequence
$$
\xymatrix{
\mathfrak{g} \ar[r]^-{\d\mu_x} & T_xM \ar[r]^{\d^\mathrm{vert}\sigma}& E_x
}
$$
is exact, where:
\begin{itemize}
 \item $\mathfrak{g}$ is the Lie algebra of $G$, seen as the tangent space of $G$ at the identity.
\item $\mu_x: G\to M$ is the map $\mu_x(g):= g\cdot x$ and $\d\mu_x$ denotes the tangent map from $\mathfrak{g}$ to $T_xM$.
 \item $\d^\mathrm{vert}\sigma$ is the vertial derivative of $\sigma$ at $x$, which is defined as the composition of the usual differential $\d_x\sigma:T_xM \to T_{0_x}E$ with the canonical projection $T_{0_x}E \cong T_xM \oplus E_x \to E_x$.
\end{itemize}
\end{definition}

\begin{prop}\label{thm: IFT}
Suppose $\sigma$ is an equivariant section of the vector bundle $E\to M$
and let $x$ be a non-degenerate zero of $\sigma$.
Then there is an open neighborhood $U$ of $x$ and a smooth map $h: U\to G$ such that
for all $y\in U$ with $\sigma(y) = 0$, one has $h(y)\cdot x = y$.
In particular, the orbit of $x$ under the $G$-action contains an open neighborhood of $x$ inside
the zero set of $\sigma$.
\end{prop}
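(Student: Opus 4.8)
The plan is to deduce Proposition~\ref{thm: IFT} from the standard implicit function theorem applied to a suitably chosen finite-dimensional map. First I would reduce to a local model: pick a chart of $M$ near $x$ identifying a neighborhood of $x$ with an open subset of $T_xM$ (sending $x$ to $0$), and a local trivialization of $E$ over that neighborhood identifying $E$ with $(T_xM)\times E_x$. In these coordinates the section $\sigma$ becomes a smooth map $s$ from an open subset of $T_xM$ into $E_x$ with $s(0)=0$ and $d_0 s = d^{\mathrm{vert}}\sigma$ (the non-vertical part of the differential is absorbed by the trivialization). The non-degeneracy hypothesis says exactly that $\Im(d\mu_x) = \ker(d_0 s)$ inside $T_xM$.

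Next I would build an auxiliary map whose regularity is governed by non-degeneracy. Choose a linear complement $W$ to $\Im(d\mu_x)$ in $T_xM$, so $T_xM = \Im(d\mu_x)\oplus W$, and choose a linear section $\ell: \Im(d\mu_x)\to\gg$ of $d\mu_x$; write $K := \ker(d\mu_x)\subset\gg$ and fix a complement $\gg = K \oplus \ell(\Im(d\mu_x))$ — equivalently just pick any linear $j: \Im(d\mu_x)\to\gg$ with $d\mu_x\circ j = \mathrm{id}$. Consider the map
\[
F:\ (\text{nbhd of }0\text{ in }\Im(d\mu_x))\times(\text{nbhd of }x\text{ in }M)\ \To\ E,\qquad F(\xi,y):=\exp(j\xi)^{-1}\cdot y
\]
or, more elementarily, $F(\xi,y) := $ the section $\sigma$ evaluated after translating $y$ by the group element $\exp(j\xi)$, composed with an identification of the fibers $E_{\exp(j\xi)\cdot y}\cong E_x$. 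Actually the cleaner device is to define $G:(\text{nbhd of }0\text{ in }\Im(d\mu_x))\times W\to E_x$ by transporting everything back: $G(\xi,w) := $ the $E_x$-component of $\sigma\bigl(\exp(j\xi)\cdot \psi(w)\bigr)$ where $\psi$ parametrizes the $W$-directions through $x$. Then $G(0,0)=0$, and $d_{(0,0)}G$ restricted to the $W$-factor is $d^{\mathrm{vert}}\sigma|_W$, which by non-degeneracy (since $W$ maps isomorphically onto $T_xM/\ker d_0 s$ and $d_0 s$ has kernel exactly $\Im d\mu_x$) is injective; a dimension count then shows it is an isomorphism onto its image. The honest statement to extract is: the map $(\xi,w)\mapsto \exp(j\xi)\cdot\psi(w)$ is a local diffeomorphism near $(0,0)$ onto a neighborhood of $x$ (because its differential is $d\mu_x\circ j \oplus \mathrm{id}_W = \mathrm{id}_{\Im d\mu_x}\oplus\mathrm{id}_W$, an isomorphism), and in these new coordinates $\sigma$ vanishes precisely when $w=0$, by applying the implicit function theorem to $w\mapsto G(\xi,w)$ with parameter $\xi$.

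Carrying that out: apply the implicit function theorem to $G$ in the $w$-variable at $(\xi,w)=(0,0)$. Since $\partial_w G(0,0)$ is injective with image a subspace of dimension $\dim W = \dim\ker d_0 s^{\perp}$... — here I must be careful, $\partial_w G(0,0)$ need not be surjective onto all of $E_x$, so instead of the plain IFT I would use the constant-rank / submersion-onto-image argument, or better: note that exactness of the three-term sequence forces $d^{\mathrm{vert}}\sigma$ to be injective on $W$ and this is enough to conclude that, after composing with a projection of $E_x$ onto the image, $G(\xi,\cdot)$ is a diffeomorphism onto its image for $\xi$ small, so the zero locus $\{G(\xi,w)=0\}$ near the origin is exactly $\{w=0\}$. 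Pulling this back through the local diffeomorphism $(\xi,w)\mapsto\exp(j\xi)\cdot\psi(w)$ gives an open neighborhood $U$ of $x$ such that every zero $y\in U$ of $\sigma$ has the form $y = \exp(j\xi)\cdot\psi(0) = \exp(j\xi)\cdot x$ for a unique small $\xi$; set $h(y):=\exp(j\xi(y))$, which is smooth in $y$ because $\xi(y)$ is the (smooth) first coordinate in the inverse chart. The last sentence of the proposition is then immediate since $h(y)\cdot x = y$ exhibits every nearby zero as lying in the $G$-orbit of $x$.

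The main obstacle I expect is exactly the point flagged above: $d^{\mathrm{vert}}\sigma$ is typically \emph{not} surjective, so one cannot invoke the implicit function theorem in its naive ``full-rank'' form. The clean way around it is to phrase everything as: the auxiliary map $\Psi(\xi,w) := \exp(j\xi)\cdot\psi(w)$ is a local diffeomorphism (this only uses $d\mu_x$ having $\Im$ complementary to $W$ and $j$ a right inverse — a linear-algebra fact, not an analytic one), and then $\sigma\circ\Psi$ is a smooth map that vanishes on $\{w=0\}$ and whose vertical derivative in the $w$-direction at the origin is injective; injectivity of a derivative along a submanifold of zeros, plus the ambient section vanishing on that submanifold, forces the zero set to coincide with the submanifold locally (one can see this by choosing a complement to $\Im d^{\mathrm{vert}}\sigma$ in $E_x$, projecting, and applying the standard IFT to the resulting map which now \emph{is} a local diffeomorphism in $w$). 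Equivariance of $\sigma$ is what guarantees $\sigma\circ\Psi$ vanishes identically in the $\xi$-direction — that is, $\sigma(\exp(j\xi)\cdot x) = \exp(j\xi)\cdot\sigma(x) = 0$ — so it is genuinely used, and it is worth isolating this as the one place the hypothesis ``equivariant'' enters.
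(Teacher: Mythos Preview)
Your argument is correct. The final paragraph contains the complete proof: the map $\Psi(\xi,w)=\exp(j\xi)\cdot\psi(w)$ is a local diffeomorphism onto a neighborhood of $x$; equivariance gives $\sigma\circ\Psi(\xi,0)=0$ for all small $\xi$; and since $\partial_w(\sigma\circ\Psi)|_{(0,0)}=d^{\mathrm{vert}}\sigma|_W$ is injective, composing with a projection onto its image and applying the implicit function theorem in $w$ forces the nearby zero set to be exactly $\{w=0\}$. Setting $h=\exp\circ\, j\circ(\text{first coordinate of }\Psi^{-1})$ gives the required smooth map on all of $U$.

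The paper takes a somewhat different route. Rather than using the group exponential to straighten the orbit, it works only with the orbit map $\alpha=\mu_x:G\to M$ and the trivialized section $\beta:M\to V$, and runs a rank argument: lower semicontinuity of rank together with $\beta\circ\alpha\equiv 0$ and exactness at $x$ force both $\alpha$ and $\beta|_{\Im\alpha}$ to have \emph{constant} rank near $\mathrm{id}$ and $x$. The constant rank theorem then furnishes coordinates in which $\alpha(y,z)=(y,0)$ and $\beta(y,0)=0$, after which a single application of the implicit function theorem to $(y,w)\mapsto(y,\beta(y,w))$ finishes. Your approach is more geometric and sidesteps the semicontinuity/constant-rank step entirely by exploiting the Lie group structure (the exponential map gives you the straightening for free); the paper's approach, by contrast, uses nothing about $G$ beyond the existence of the smooth map $\alpha$ with $\beta\circ\alpha=0$, so it would go through verbatim if $G$ were replaced by any manifold mapping into the zero set. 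Either way the analytic core is the same implicit function theorem step you identify, applied after projecting $E_x$ onto $\Im d^{\mathrm{vert}}\sigma$.
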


\begin{proof}
To simplify the notation, we denote the map $\mu_x(g):=g\cdot x$ by $\alpha$.
Observe that by restricting to an open neighborhood of $x$, we can assume $E$ to be trivial, i.e. $E = M\times V$.
The section $\sigma$ then becomes a map $\beta: M\to V$. The vertical differential of $\sigma$ at $x$ 
translates to the usual differential of $\beta$.

Recall that the map which associates to each point of $G$ (respectively $M$) the rank of the differential of $\alpha$ (respectively $\beta$) is lower semicontinuous, and thus it follows that
\[\rank(\d_{g'}\alpha) \geq \rank(\d_\mathrm{id} \alpha) \qquad \text{for all } g' \in G \text{ close enough to } \mathrm{id}\]
and
\[\rank(\d_{x'}\beta) \geq \rank(\d_x \beta) \qquad \text{for all } x' \in M \text{ close enough to } x.\]
On the other hand, the assumption that $\beta \circ \alpha$ is zero implies $\Im \d_{g'}\alpha \subseteq \ker \d_{\alpha(g')}\beta$ and thus
\[\rank(\d_{\alpha(g')}\beta) + \rank(\d_{g'}\alpha) \leq \dim M = \rank(\d_{\alpha(g')}\beta) + \dim \ker(\d_{\alpha(g')}\beta) \]
for all $g'$ sufficiently close to $\mathrm{id} \in G$.
Finally, from $\ker \d_{x}\beta = \Im \d_{\mathrm{id}} \alpha$ and the inequalities above, we conclude that 
\begin{align*}
\rank(\d_{\alpha(g')}\beta) =  \rank(\d_{x}\beta) \qquad \text{and} \qquad \rank(\d_{g'}\alpha) = \rank(\d_{\mathrm{id}}\alpha)
\end{align*}
for all $g' \in G$ close enough to $\mathrm{id}$.
Thus, there exists a neighborhood $W$ of $\mathrm{id} \in G$ such that $\alpha$ has constant rank on $W$ and $\beta|_{\Im \alpha}$ has constant rank on $\alpha(W)$.

We set $m := \dim G$, $r := \rank(\d_\mathrm{id} \alpha)$, and $s := \rank(\d_{x} \beta)$, and use the constant rank theorem to identify locally $G = \Rr^r \times \Rr^{m-r}$ and $M = \Rr^r \times \Rr^s$ in such a way that
\[\alpha(y,z) = (y,0) \quad \text{ and } \quad \beta(y,0) = 0.\]

Now consider the map
\[\psi: \Rr^r \times \Rr^s \to \Rr^r \times V, \quad \psi(y,w) = (y, \beta(y,w)).\]
We note that it has constant (maximal) rank equal to $r + s$. In fact, we already know that the rank of $\d \beta|_{(y,0) = \Im \alpha}$ is  $s$, and since $\frac{\partial \beta}{\partial y^i}(y,0) = 0$ it follows that the matrix $(\frac{\partial \beta^l}{\partial w^j})$ must be of maximal rank ($ = s$). Thus, from the Implicit Function Theorem it follows that $\psi(y,w) = (y,0)$ implies $w = 0$.

Let $h: U \subset M \simeq \Rr^r \times \Rr^s \to G \simeq \Rr^r \times \Rr^{m-r}$ be given by $h(y,w) = (y,0)$. Then for $\beta(x') = 0$ we have that $x' = (y,0)$, and thus $(\alpha\circ h)(y') = y'$. 
\end{proof}


\subsection{Stability of Zeros}

\begin{prop}[Stability of Zeros]\label{thm: IFT2}
Let $E$ and $F$ be vector bundles over $M$. Let $\sigma \in \Gamma(E)$ be a section and $\phi \in \Gamma(\hom(E,F))$ a vector bundle map satisfying $\phi \circ \sigma = 0$. Suppose that $x \in M$ is such that $\sigma(x) = 0$, and 
\begin{equation}\label{eq: exact}
\xymatrix{T_xM \ar[r]^{\d^{\mathrm{vert}}_x\sigma}& E_x\ar[r]^{\phi_x} & F}
\end{equation}
is exact.

Then the following statements hold true:
\begin{enumerate}
\item $\sigma^{-1}(0)$ is locally a manifold around $x$ of dimension equal to the dimension of $\ker(\d_x^{\mathrm{vert}}\sigma)$.

\item If $\sigma'$ is another section of $\Gamma(E)$ which is $\mathcal{C}^0$-close to $\sigma$, and $\phi'$ is another vector bundle map $E \to F$ which is $\mathcal{C}^0$-close to $\phi$ and such that $\phi'\circ\sigma' = 0$, then there exists $x' \in M$ close to $x$ such that $\sigma'(x') = 0$.

\item If moreover $\sigma'$ is $\mathcal{C}^1$-close to $\sigma$, then $\sigma'^{-1}(0)$ is also locally a manifold around $x'$ of the same dimension as $\sigma^{-1}(0)$.
\end{enumerate}
\end{prop}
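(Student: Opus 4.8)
The plan is to reduce everything to a local statement in a trivialization and then invoke the standard constant-rank / implicit function theorem machinery, exactly in the spirit of the proof of Proposition \ref{thm: IFT}. First I would restrict to a neighborhood of $x$ over which both $E$ and $F$ are trivial, say $E = M \times V$ and $F = M \times W$; then $\sigma$ becomes a map $\beta \colon M \to V$ with $\beta(x) = 0$, the vertical derivative $\d^{\mathrm{vert}}_x\sigma$ becomes the ordinary differential $\d_x\beta$, and $\phi$ becomes a map $M \to \hom(V,W)$ with $\phi(y)\circ\beta(y) = 0$ for all $y$. Shrinking $M$ to a coordinate ball, I may assume $M = \Rr^n$ with $x = 0$.

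For part (1), the argument is essentially the one already written for Proposition \ref{thm: IFT}: exactness of \eqref{eq: exact} gives $\Im(\d_0\beta) = \ker(\phi_0)$, and the relation $\phi\circ\beta = 0$ gives $\Im(\d_y\beta) \subseteq \ker(\phi_y)$ for all $y$ near $0$. Combining lower semicontinuity of $y \mapsto \rank(\d_y\beta)$ with upper semicontinuity of $y\mapsto \dim\ker(\phi_y) = \dim W - \rank(\phi_y)$ forces $\rank(\d_y\beta)$ to be locally constant, equal to $s := \rank(\d_0\beta)$, near $0$. The constant rank theorem then identifies $\beta^{-1}(0)$ near $0$ with a linear subspace of dimension $n - s = \dim\ker(\d_0^{\mathrm{vert}}\sigma)$, proving (1). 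I would record the precise constant-rank normal form $\beta(y,z) = (z,0)$ after a coordinate change $\Rr^n \cong \Rr^{n-s}\times\Rr^{s}$, since it gets reused.

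For part (2), given $\sigma'$ that is $\mathcal{C}^0$-close to $\sigma$ and $\phi'$ that is $\mathcal{C}^0$-close to $\phi$ with $\phi'\circ\sigma' = 0$, I pass again to the trivialization: $\sigma' \leftrightarrow \beta'$ uniformly close to $\beta$, $\phi'\leftrightarrow$ a map close to $\phi$. Working in the normal-form coordinates from (1), write $\beta = (\beta_1,\beta_2)$ where $\beta_1$ is a submersion onto $\Rr^{s}$ near $0$ (it is a coordinate projection) and $\beta_2 \equiv 0$; the submersion property of $\beta_1$ persists under $\mathcal{C}^1$-small perturbation, but for a merely $\mathcal{C}^0$-small perturbation I need a different device. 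The clean way is: since $\Im(\d_0\beta) = \ker\phi_0$ and $\phi_0\colon V\to W$ is the given map, the vertical derivative $\d_0\beta\colon T_0M \to \ker\phi_0$ is surjective; choose a right inverse, i.e. an $s$-dimensional subspace $S\subset T_0M$ on which $\d_0\beta$ restricts to an isomorphism onto $\ker\phi_0$. Now observe that $\phi'(y)\circ\beta'(y) = 0$ means $\beta'(y) \in \ker\phi'(y)$, and for $\phi'$ $\mathcal{C}^0$-close to $\phi$ (hence $\phi'(y)$ close to $\phi_0$ for $y$ near $0$) the subspaces $\ker\phi'(y)$ are uniformly close to $\ker\phi_0$; in particular $\phi'(y)$ has locally constant rank $\geq \rank\phi_0$, and after possibly shrinking equal to it, so $\dim\ker\phi'(y) = \dim\ker\phi_0$. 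Therefore $\beta'$ takes values in a rank-$(\dim\ker\phi_0)$ subbundle of $M\times V$ that is $\mathcal{C}^0$-close to the constant bundle $M\times\ker\phi_0$; projecting $\beta'$ to $\ker\phi_0$ along a complement and composing with the isomorphism $\ker\phi_0 \cong \Rr^{s}$, I obtain a map $\tilde\beta'\colon \Rr^n \to \Rr^{s}$ which is $\mathcal{C}^0$-close to the coordinate projection $\beta_1$ and which vanishes at $y$ exactly when $\beta'(y) = 0$. Finally, since the zero set of the unperturbed coordinate projection $\beta_1$ is a linear subspace through $0$ transverse to $S$, a degree / Brouwer fixed-point argument (or the quantitative inverse function theorem applied to $\beta_1|_S$, which is a linear isomorphism, perturbed $\mathcal{C}^0$-ly) produces a zero $y' \in S$ near $0$ of $\tilde\beta'$, hence a zero $x'$ of $\sigma'$ near $x$. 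This is the step I expect to be the main obstacle: a $\mathcal{C}^0$-small perturbation of a submersion need not be a submersion, so the implicit function theorem does not apply directly, and one must instead exploit that $\beta$ has a \emph{linear} right inverse at $x$ coming from transversality and run a topological (degree/fixed-point) argument rather than a differential one; keeping track of the perturbed kernel bundles $\ker\phi'(y)$ is the bookkeeping cost.

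For part (3), if in addition $\sigma'$ is $\mathcal{C}^1$-close to $\sigma$, then in the trivialization $\beta'$ is $\mathcal{C}^1$-close to $\beta$, so $\d_{x'}\beta'$ is close to $\d_x\beta$ and in particular $\rank(\d_{x'}\beta') \geq \rank(\d_x\beta) = s$ by lower semicontinuity; on the other hand $\phi'\circ\beta' = 0$ gives, just as in (1), $\Im(\d_{y}\beta')\subseteq\ker\phi'(y)$ whose dimension is $\dim V - \rank\phi'(y) \leq \dim V - \rank\phi_0 = \dim\ker\phi_0 = s$ for $y$ near $x'$ (using that $\rank\phi'(y)\ge\rank\phi_0$ near $x$, established in (2)). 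Hence $\rank(\d_y\beta') \equiv s$ near $x'$, and the constant rank theorem again shows $\beta'^{-1}(0) = \sigma'^{-1}(0)$ is a manifold near $x'$ of dimension $n - s$, the same as $\sigma^{-1}(0)$. I would close by remarking that all the "shrink the neighborhood" choices are made finitely often and can be taken uniform, so the statement holds on a fixed neighborhood of $x$ independent of the (sufficiently small) perturbations.
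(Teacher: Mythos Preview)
Your approach is essentially the paper's: trivialize, split $V = A \oplus B$ with $A = \Im(\d_x\sigma) = \ker\phi_x$, and for part (2) use a degree argument to find a zero of the $A$-component of $\sigma'$, then injectivity of $\phi'$ on $B$ to conclude $\sigma'(x') = 0$. Parts (1) and (3) are fine; the paper phrases them via transversality of $\sigma$ to the linear subspace $B$ (so that $\sigma^{-1}(0) = \sigma^{-1}(B)$ near $x$) rather than via the constant rank theorem, but the two formulations are equivalent.

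There is one inaccuracy in your part (2). You assert that $\rank\phi'(y)$ is, after shrinking, \emph{equal} to $\rank\phi_0$, so that $\ker\phi'(y)$ forms a subbundle of rank $s$ close to the constant bundle $M\times\ker\phi_0$. Lower semicontinuity only gives $\rank\phi'(y) \geq \rank\phi_0$, and nothing in the hypotheses prevents strict inequality; the kernels $\ker\phi'(y)$ need not have constant dimension. Fortunately you do not actually need this claim. What your argument really uses is that if the projection of $\beta'(y)$ onto $A = \ker\phi_0$ along the fixed complement $B$ vanishes, then $\beta'(y) = 0$; and for this it suffices that $\ker\phi'(y)\cap B = \{0\}$, i.e., that $\phi'(y)|_B$ is injective --- an open condition that holds for $\phi'$ close to $\phi$ and $y$ close to $x$, regardless of whether $\dim\ker\phi'(y)$ equals $s$ or is smaller. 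The paper's proof states exactly this: it fixes the decomposition $V = A\oplus B$ once and for all, notes that injectivity of $\phi'(x')|_B$ is open, runs the degree argument on spheres to find $x'$ with $\sigma'_A(x') = 0$ (writing $\sigma'_A(u,v) = u + f(u,v)$ in submersion coordinates for $\sigma_A$ and comparing two homotopies of the induced self-map of $\mathrm{S}^{a-1}$), and then concludes $\sigma'(x') = 0$ from $\sigma'(x')\in B$ and injectivity of $\phi'(x')|_B$. Once you drop the subbundle discussion, your argument and the paper's coincide.
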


\begin{proof}
First of all, since the statements in the theorem are all local, we can assume that both $E$ and $F$ are trivial, and thus $\sigma$ and $\phi$ are just smooth maps
\[\sigma: M \to V, \quad \text{and} \quad \phi: M \to \hom(V,W)\] 
for vector spaces $V$ and $W$. Note that in this local description the vertical derivative of the section $\sigma$ at $x$ becomes the usual derivative of $\sigma$ as a map $M \to V$.

Let $A = \Im\d_x\sigma$, and choose a complement $B$ to $A$ in $V$, i.e.,
\[V = A\oplus B.\]
By exactness of \eqref{eq: exact}, $\phi(x)|_B$ is injective, and $\sigma$ intersects $B$ transversely at $\sigma(x) = 0$.
Since injectivity of $\phi(x)|_B$ is an open condition (it can be expressed as a minor of $\phi(x)$ having non-zero determinant), it follows that for every $\phi': M \to \hom(V,W)$ close to $\phi$, and for every $x'$ close to $x$, the map $\phi'(x')|_B$ is injective.

Note that this already implies the first statement. In fact, if $x'$ is close enough to $x$, and $\sigma(x') \in B$, then $\sigma(x') = 0$ because $\phi(x')(\sigma(x')) = 0$, and $\phi(x')$ is injective on $B$. Thus, close to $x$ we have that $\sigma^{-1}(0) = \sigma^{-1}(B)$, and the smoothness follows from the fact that $\sigma$ is transverse to $B$.

Next, we claim that if $\sigma'$ is $\mathcal{C}^0$-close to $\sigma$, then there exists an $x'$ close to $x$ for which $\sigma'(x') \in B$. In order to see this, let us decompose $\sigma$ in 
\[\sigma_A: M \to A, \quad \text{ and } \quad \sigma_B:M \to B.\]
Then, by the definition of $A$, $\sigma_A$ is a submersion, and thus we may choose coordinates on $M$ and on $A$ such that $\sigma_A(u, v) = u$. If $\sigma' = (\sigma'_A,\sigma'_B)$ is close to $\sigma$, then locally $\sigma_A'$ can be written as 
\[\sigma'_A (u,v)= u + f(u,v)\]
for some $f: M \to A$ such that $||f(u,v)|| < \varepsilon$ for all $(u,v) \in M$, and we must show that there exists $(u,v)$ close to $(0,0)$ such that $\sigma'_A(u,v) = 0$. 

Assume that such a pair $(u,v)$ does not exist. Then, for each $v$, we get a well defined map
\[g_v: \mathrm{S}^{a-1} \longrightarrow \mathrm{S}^{a-1}, \quad u \longmapsto  \frac{u + f(u,v)}{||u + f(u,v)||},\]
where $\mathrm{S}^{a-1}$ denotes the unit sphere of dimension $a-1 = \dim A -1$.  

On the one hand, $g_v$ has degree one, since it is homotopic to the identity map through the homotopy
\[g_v(t, u) = \frac{u +t f(u,v)}{||u + tf(u,v)||}.\]

On the other hand, $g_v$ has degree zero since it is homotopic to a constant map via
\[g_v(t, u) = \frac{tu + f(tu,v)}{||tu + f(tu,v)||},\]
which is well defined because we assumed that $\sigma'_A$ does not vanish at any point. 

Thus we have obtained a contradiction and it follows that we can find $x'$ close to $x$ for which $\sigma'(x') \in B$. This immediately implies the second statement in the theorem, for if $\sigma'(x') \in B$, it follows from the injectivity of $\phi'(x')|_B$, and from $\phi'(x')\circ\sigma'(x') = 0$ that $\sigma'(x') = 0$.

Finally, in order to prove the last statement, we note that if $\sigma'$ is $\mathcal{C}^1$-close to $\sigma$, then it also intersects $B$ transversely at $\sigma'(x') = 0$, and thus we can apply again the same argument used to prove the first statement of the theorem.
\end{proof}

\subsection{Kuranishi's description of zero sets}

We reconsider the setting of the previous subsection:
Let $E$ and $F$ be vector bundles over $M$, $\sigma \in \Gamma(E)$. Suppose $\phi \in \Gamma(\hom(E,F))$ is a vector bundle map from $E$ to $F$ satisfying $\phi \circ \sigma = 0$. 
In addition, we assume that a Lie group $G$ acts on $E$ in such a way that 1) the zero section $Z: M\to E$ $\sigma$
is preserved and 2) the section $\sigma$ is equivariant.

\begin{prop}[Kuranishi models]\label{theorem:Kuranishi}
Suppose that $x$ is a zero of the section $\sigma$. Let $\mu_x: G\to M$ be the map $\mu_x(g):= g\cdot x$.

Then there is
\begin{enumerate}
 \item a submanifold $S$ of $M$, containing $x$ and of dimension $\dim \ker(\d_x^\mathrm{vert}\sigma) - \dim \Im(d\mu_x)$,
\item a subbundle $\tilde{E}$ of $E|_S$ of rank $\dim \ker(\phi_x) - \dim \Im(d_x^\mathrm{vert}\sigma)$,
 \item a section $\tilde{\sigma}$ of $\tilde{E}$,
\end{enumerate}
such that the following conditions are satisfied:
\begin{enumerate}
 \item The zero sets of $\tilde{\sigma}$ and of $\sigma|_S$ coincide.
 \item If $x'$ is a zero of $\sigma$ sufficiently close to $x$, then $x'$ lies in the $G$-orbit of a zero of $\tilde{\sigma}$.
 \item The section $\tilde{\sigma}$ vanishes at $x$ to second order.
\end{enumerate}
\end{prop}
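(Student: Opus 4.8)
The plan is to construct the Kuranishi slice $S$ and the reduced bundle $\tilde E$ by combining the slice theorem for the $G$-action with a linear-algebra decomposition of the fibre $E_x$, and then to obtain $\tilde\sigma$ by composing $\sigma|_S$ with a projection. Concretely: working locally, trivialise $E = M\times V$ and $F = M \times W$ as in the proofs of Propositions \ref{thm: IFT} and \ref{thm: IFT2}, so that $\sigma$ becomes a map $M\to V$ whose vertical derivative at $x$ is the ordinary derivative $\d_x\sigma$. Let $A := \Im(\d_x^{\mathrm{vert}}\sigma) \subseteq V$ and pick a complement $B$, so $V = A\oplus B$; by exactness of \eqref{eq: exact}, $\phi_x|_B$ is injective and $\Ker\phi_x = A \oplus (\text{something})$ — more precisely $\dim\Ker\phi_x = \dim A + (\dim\Ker\phi_x - \dim A)$, and we will take the rank of $\tilde E$ to be this last number, which is why $\tilde E$ has rank $\dim\Ker(\phi_x) - \dim\Im(\d_x^{\mathrm{vert}}\sigma)$.

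First I would build $S$. On the manifold side, $\d\mu_x : \gg \to T_xM$ has image contained in $\Ker(\d_x^{\mathrm{vert}}\sigma)$ (differentiate the equivariance $\sigma(g\cdot x) = g\cdot\sigma(x)$ at $g=\mathrm{id}$, using that the zero section is preserved). Choose a complement to $\Im(\d\mu_x)$ inside $\Ker(\d_x^{\mathrm{vert}}\sigma)$; by the slice theorem for the (locally free near $x$, or at least infinitesimally injective — one restricts to where $\d\mu_x$ has constant rank as in the proof of Proposition \ref{thm: IFT}) $G$-action, there is a submanifold $S\ni x$ with $T_xS$ equal to that complement, transverse to the $G$-orbit, so that every zero of $\sigma$ near $x$ is $G$-equivalent to one in $S$. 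This gives item (2) of the conclusion and fixes $\dim S = \dim\Ker(\d_x^{\mathrm{vert}}\sigma) - \dim\Im(\d\mu_x)$. Then I would define $\tilde E := S \times B'$ where $B'$ is a chosen complement of $\Im(\phi_x)$-related space — actually the cleaner choice is: let $\pi_B : V \to B$ be the projection along $A$, note $\tilde\sigma_0 := \pi_B\circ\sigma|_S : S \to B$ still need not have the right rank, so instead one further decomposes $B = B_1 \oplus B_2$ where $B_1 = \d_x(\sigma|_S)$-image inside $B$ is zero (since $T_xS \subseteq \Ker\d_x\sigma$!), hence actually $\d_x(\sigma|_S) = 0$ entirely and the subtlety is only about getting the \emph{rank} of $\tilde E$ right. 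So take $\tilde E := S\times B$ with $B$ a complement to $A$ in $V$; its rank is $\dim V - \dim A = \dim\Ker\phi_x - \dim A + (\dim V - \dim\Ker\phi_x - \dim\Im\phi_x + \dim\Im\phi_x)$ — here one uses that $\phi_x|_B$ injective forces $\dim B \le \dim W$ and the exactness pins $\dim\Ker\phi_x = \dim\Im(\d_x^{\mathrm{vert}}\sigma) = \dim A$ is false in general; rather rank-nullity on $\phi_x : V\to W$ combined with $\Im(\d_x^{\mathrm{vert}}\sigma) = \Ker\phi_x$? No — exactness of \eqref{eq: exact} says $\Im(\d_x^{\mathrm{vert}}\sigma) = \Ker\phi_x$, so $A = \Ker\phi_x$ and $\dim B = \dim\Ker\phi_x - \dim A = 0$?! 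That cannot be the intended setup; the correct reading is that the longer exact sequence $\gg \to T_xM \to E_x \to F$ from Kuranishi's setting is \emph{not} assumed exact at $E_x$, only a complex — so I would \emph{not} assume exactness at $E_x$ here, take $A = \Im(\d_x^{\mathrm{vert}}\sigma) \subseteq \Ker\phi_x$ properly, pick $\tilde E$-fibre $B$ to be a complement of $A$ inside $\Ker\phi_x$ (rank $\dim\Ker\phi_x - \dim A$, matching the statement), and set $\tilde\sigma := (\text{projection }\Ker\phi_x \to B)\circ(\sigma|_S)$, which makes sense since $\phi\circ\sigma = 0$ means $\sigma$ lands in $\Ker\phi$ along zeros — more carefully one uses a tubular neighbourhood to project $\sigma|_S$ into the subbundle $\Ker\phi$ near $x$.

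With these definitions: item (1), that the zero sets of $\tilde\sigma$ and $\sigma|_S$ coincide near $x$, follows exactly as in the proof of Proposition \ref{thm: IFT2} — $\sigma|_S(y) \in \Ker\phi_y$ near $x$ and $\phi_y$ restricted to the complement $A_y$ of $B_y$ is injective, so the $A$-component of $\sigma|_S$ vanishes automatically once the $B$-component does (and conversely), hence $\sigma|_S = 0 \iff \tilde\sigma = 0$. Item (2) is the slice theorem statement recorded above. Item (3), that $\tilde\sigma$ vanishes to second order at $x$, is the cleanest: $\d_x(\sigma|_S)$ is the restriction of $\d_x^{\mathrm{vert}}\sigma$ to $T_xS$, but $T_xS \subseteq \Ker(\d_x^{\mathrm{vert}}\sigma)$ by construction, so $\d_x(\sigma|_S) = 0$, and therefore $\d_x\tilde\sigma = 0$ as well. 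The main obstacle is making the $G$-action part rigorous: near a zero $x$ the action need not be free, so I would invoke the constant-rank reduction already used in Proposition \ref{thm: IFT} — shrink to where $\d\mu_{(\cdot)}$ has locally constant rank, apply the constant rank theorem to split off the orbit directions, and thereby get the slice $S$ together with the smooth map $h$ sending nearby zeros into their $S$-representatives. The second delicate point, handled by a tubular-neighbourhood / vector-bundle-chart argument, is promoting the pointwise splitting $E_x = (\text{complement of }\Ker\phi_x) \oplus A \oplus B$ to a smooth splitting of $E|_S$ near $x$ so that $\tilde E$ and $\tilde\sigma$ are genuinely smooth; this is routine once injectivity of $\phi|_{(\text{complement})}$ is known to be an open condition, exactly as exploited in Proposition \ref{thm: IFT2}.
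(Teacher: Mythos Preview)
Your proposal has the right ingredients but two genuine gaps keep it from working.

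\textbf{Gap 1: the slice $S$.} You take $T_xS$ to be a complement of $\Im(\d\mu_x)$ \emph{inside} $\Ker(\d_x^{\mathrm{vert}}\sigma)$ and then invoke the slice theorem. But such an $S$ is not transverse to the $G$-orbit in $M$: one has $T_xS + \Im(\d\mu_x) = \Ker(\d_x^{\mathrm{vert}}\sigma)$, a proper subspace of $T_xM$ whenever $\d_x^{\mathrm{vert}}\sigma \neq 0$. So the slice theorem does not apply and item~(2) does not follow. The paper proceeds in two steps: first take a \emph{full} transversal $\tau$ to the orbit (so $T_x\tau \oplus \Im(\d\mu_x) = T_xM$), whose $G$-sweep covers a neighborhood of $x$; then cut $\tau$ down to $S$ via Lemma~\ref{lemma:bad_directions}, i.e.\ define $S$ as the preimage under $\sigma|_\tau$ (viewed in the subbundle $\hat{E}$ with fibre $\Ker\phi_x$, where $\sigma$ lands thanks to $\phi\circ\sigma=0$) of a complement $C$ to $\Im(\d_x^{\mathrm{vert}}\sigma)$ inside $\Ker\phi_x$. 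Zeros of $\sigma|_\tau$ then automatically lie in $S$ since $0\in C$, and this is what yields item~(2).

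\textbf{Gap 2: the section $\tilde\sigma$.} You set $\tilde\sigma$ to be the projection of $\sigma|_S$ onto $B$ (a complement of $A = \Im(\d_x^{\mathrm{vert}}\sigma)$ in $\Ker\phi_x$) and argue that $\tilde\sigma = 0 \Leftrightarrow \sigma|_S = 0$ because ``$\phi_y$ restricted to $A_y$ is injective''. But $A \subseteq \Ker\phi_x$, so $\phi_x|_A = 0$; the injectivity claim is false, and nothing forces the $A$-component of $\sigma|_S$ to vanish when the $B$-component does. (You are importing the mechanism of Proposition~\ref{thm: IFT2}, where exactness gives $A = \Ker\phi_x$ and $B$ is a complement in all of $E_x$; that is precisely what is \emph{not} assumed here.) The paper avoids any projection: having \emph{defined} $S := (\sigma|_\tau)^{-1}(C)$, the restriction $\sigma|_S$ already takes values in $C$, so one sets $\tilde\sigma := \sigma|_S$ and item~(1) is a tautology. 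Your argument for item~(3) --- that $T_xS \subseteq \Ker(\d_x^{\mathrm{vert}}\sigma)$ kills the first derivative --- is correct and survives, since the paper's $S$ also has this property.
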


In the course of the proof of Proposition \ref{theorem:Kuranishi}, we will use the following lemma:

\begin{lemma}\label{lemma:bad_directions}
Let $f: \mathbb{R}^m \to \mathbb{R}^n$ be a smooth function that maps zero to zero.
Then there is 
\begin{enumerate}
 \item a submanifold $Z$ of $\mathbb{R}^m$ of dimension $\dim \ker(d_0f)$,
 \item a linear subspace
$C \subset \mathbb{R}^n$ of dimension $n-\dim \Im(d_0f)$
\item and a smooth map $\tilde{f}: Z\to C$
\end{enumerate}
such that
\begin{enumerate}
 \item The zero sets of $\tilde{f}$ and of $f$ coincide in a neighborhood of $0$.
 \item The map $\tilde{f}$ vanishes at $0$ to second order.
\end{enumerate}
\end{lemma}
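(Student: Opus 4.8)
\textbf{Proof proposal for Lemma \ref{lemma:bad_directions}.}

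The plan is to perform a Lyapunov--Schmidt-type reduction, peeling off the ``good directions'' in the source and the ``image directions'' in the target by means of the constant rank / implicit function theorem applied to a cleverly modified map. Write $r = \dim\Im(d_0f)$, $k = \dim\ker(d_0f) = m - r$, and $c = n - r$. First I would split $\mathbb{R}^m = K \oplus L$, where $K = \ker(d_0f)$ and $L$ is a complement, and split $\mathbb{R}^n = A \oplus C$, where $A = \Im(d_0f)$ and $C$ is a complement. With respect to these splittings, $d_0f$ restricts to a linear isomorphism $L \xrightarrow{\sim} A$. Writing $f = (f_A, f_C)$ with $f_A : \mathbb{R}^m \to A$ and $f_C : \mathbb{R}^m \to C$, and coordinates $(u,v)$ with $u \in L$, $v \in K$, the partial derivative $\partial f_A / \partial u$ at $0$ is invertible, so by the implicit function theorem the equation $f_A(u,v) = 0$ can be solved locally as $u = \gamma(v)$ for a unique smooth map $\gamma : K \to L$ with $\gamma(0) = 0$ and, crucially, $d_0\gamma = 0$ (since the $K$-direction lies in the kernel of $d_0f$, the linearization of the constraint imposes no first-order condition on $v$).

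Next I would take $Z$ to be the graph of $\gamma$, i.e. $Z = \{(\gamma(v), v) : v \in K\}$, which is a submanifold of $\mathbb{R}^m$ through $0$ of dimension $k = \dim\ker(d_0f)$, with $T_0 Z = K$. I define $\tilde f : Z \to C$ as the restriction of $f_C$ to $Z$; concretely $\tilde f(v) = f_C(\gamma(v), v)$. By construction, a point $(\gamma(v),v) \in Z$ is a zero of $\tilde f$ precisely when $f_C$ vanishes there, and since $f_A$ automatically vanishes on $Z$, this happens precisely when $f$ vanishes there. Conversely, if $f(u,v) = 0$ near $0$ then in particular $f_A(u,v) = 0$, so $u = \gamma(v)$ and the point lies on $Z$; hence the zero sets of $\tilde f$ and $f$ agree in a neighborhood of $0$, giving condition (1).

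For condition (2), I would compute $d_0\tilde f$. Since $T_0 Z = K$, it suffices to evaluate $d_0 f_C$ on $K$: but $K = \ker(d_0 f)$, so $d_0 f_C|_K = 0$, whence $d_0\tilde f = 0$, i.e. $\tilde f$ vanishes to second order at $0$. (One can also see this via the chain rule: $d_0\tilde f(w) = \partial_u f_C \cdot d_0\gamma(w) + \partial_v f_C \cdot w = 0$ because $d_0\gamma = 0$ and $\partial_v f_C(0) = 0$.) Finally, $C$ was chosen with $\dim C = n - r = n - \dim\Im(d_0 f)$, matching the required dimension, so all the assertions hold. The main (and only real) obstacle is the bookkeeping in verifying that $d_0\gamma = 0$, i.e. that the implicitly defined correction introduces no first-order terms in the transverse variable $v$; this is exactly what makes $\tilde f$ second-order, and it follows cleanly from the observation that the $v$-directions were chosen to span $\ker(d_0 f)$, so differentiating the identity $f_A(\gamma(v), v) \equiv 0$ at $0$ and using the invertibility of $\partial_u f_A(0)$ forces $d_0\gamma = 0$.
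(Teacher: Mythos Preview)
Your proposal is correct and follows essentially the same approach as the paper. The paper phrases it more tersely: it just observes that $f$ is transverse to $C$ at $0$ (by construction of $C$ as a complement to $\Im d_0f$), so $Z := f^{-1}(C)$ is a smooth submanifold of the right dimension, and then sets $\tilde{f} := f|_Z$; your explicit Lyapunov--Schmidt reduction (solving $f_A(u,v)=0$ for $u=\gamma(v)$ via the implicit function theorem) is precisely the coordinate description of this same submanifold, and you have additionally written out the verifications of (1) and (2) that the paper leaves to the reader.
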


\begin{proof}
Choose a direct sum decomposition $\mathbb{R}^n = \Im \d_0f \oplus C$. Since $f$ is transverse to $C$ at $0$, 
there is a neighborhood $U$ of $0\in \mathbb{R}^m$ such that $f^{-1}(C)\cap U$ is a smooth submanifold.
One now finishes the proof by defining $Z$ to be this submanifold and $\tilde{f}$ to be the restriction of $f$ to $Z$.
\end{proof}

\begin{remark} \rm
Observe that, although $Z$ and $\tilde{f}$ do depend on the choice of a complement $C$ to $\d_0f$, one can extract the following invariants:
\begin{itemize}
 \item The tangent space of $Z$ at $0$ is canonically isomorphic to $\ker \d_0 f$.
 \item Given $v\in \ker \d_0f$, we choose a curve $\gamma$, which starts at $0$ in the direction of $v$.
We associate to it the vector
 $$\lim_{t\to 0} \frac{f(\gamma(t))}{t^2} \quad \in \quad \mathbb{R}^n.$$
 The class of this vector in $\mathbb{R}^n / \Im \d_0 f$ is independent of the choice of $\gamma$. Hence we obtain a quadratic form on $T_0Z$ with values in $\mathbb{R}^n/ \Im d_0 f$, which coincides with the second jet of $\tilde{f}$ at $0\in Z$.
\end{itemize}

\end{remark}

We now prove Porposition \ref{theorem:Kuranishi}:

\begin{proof}
We first fix a transversal $\tau$ to the orbit of $x$ under $G$, i.e. $T_x\tau \oplus T_x(G\cdot x) = T_x M$.
Since
$$G \times \tau \to M, \quad (g,y) \mapsto g\cdot y $$
is submersive at $(x,\mathrm{id})$, the orbit of $\tau$ under $G$ contains an open neighborhood of $x$.
From now on, we work over $\tau$ and replace $E$, $F$, $\sigma$ and $\phi$ by their restrictions to $\tau$.
We fix local trivializations of $E$ and $F$ near $x$, i.e.
$E = \tau \times V$ and $F = \tau \times W$. Then $\sigma$ and $\phi$ correspond to smooth maps $f: \tau \to V$
and $g: \tau \to \mathrm{Hom}(V,W)$, respectively.

Let $K$ be a complement to the image of $g_x: V\to W$. For $x'$ sufficiently close to $x$, $g_x$ will be transverse
to $K$ as well, hence $g_{x'}^{-1}(K)$ forms a subbundle $\hat{E}$ of $E$ if we let $x'$ vary in a neighborhood of $x$.
By $\phi \circ \sigma = 0$, we know that $\sigma$ actually takes values in $\hat{E}$.
We trivialize $\hat{E}$ in a neighborhood of $x$, i.e. $\hat{E}\cong \tau \times \hat{V}$
and obtain a smooth map $\hat{f}:\tau \to \tilde{V}$ which encodes $\sigma$ (as a section of $\hat{E}$).

Applying Lemma \ref{lemma:bad_directions} to $\hat{f}: M \to \hat{V}$
yields a submanifold $S$ of $\tau$ and a function $\tilde{f}: S\to \tilde{V}$
such that the zero sets of $\tilde{f}$ and $\hat{f}$ coincide in a neighborhood of $x$
and with $\tilde{f}$ vanishing to second order at $x$. We define $\tilde{\sigma}$
to be the section corresponding to $\tilde{f}$.

Now suppose $x'$ is a zero of the original map $f$ (and hence of the section $\sigma$). If $x'$ is sufficiently close to $x$, it lies in the orbit
of an element $y\in \tau$. Since $\sigma$ is equivariant, $y$ is also a zero of $f$ and hence of $\hat{f}=f|_\tau$.
Since the zero sets of $\hat{f}$ and $\tilde{f}$ coincide in a neighborhood of $x$,
it follows that $x'$ lies in the orbit of a zero of $\tilde{f}$.

We leave the verification of the dimension of $S$ and of the rank of $\tilde{E}:= \tau \times \tilde{V} $ to the reader.
\end{proof}

\begin{remark} \rm
As in the local case, the construction of $S$, $\tilde{E}$ and $\tilde{\sigma}$ depends on auxiliary choices.
Nevertheless, the following data are invariant:
\begin{enumerate}
 \item The tangent space of $S$ at $x$ is canonically isomorphic to $\ker \d_x^\mathrm{vert}\sigma / \Im \d\mu_x$.
 \item The fibre of $\tilde{E}$ is canonically isomorphic to $\ker \phi_x / \Im \d_x^\mathrm{vert}\sigma$.
 \item There is a quadratic form on $T_xS$ with values in $\tilde{E}_x$, which encodes the second jet of $\tilde{\sigma}$ at $x$.
\end{enumerate}

\end{remark}

\section{Solutions to the Problems}
\subsection{Lie Brackets}

Proposition \ref{theorem:Kuranishi} yields a local description of the moduli space of Lie brackets on $\gg$ in a neighborhood of a given one:

\begin{theorem}\label{theorem: Kuranishi_Lie_brackets}
Let $(\gg,[\cdot,\cdot])$ be a Lie algebra.
There is an open neighborhood $U$ of $0 \in H^2(\gg,\gg)$
and a smooth map $\Phi: U \to H^3(\gg,\gg)$ with the following properties:
\begin{enumerate}
 \item $\Phi$ and its derivative vanish at the origin.
 \item The zeros of $\Phi$ parametrize Lie brackets on $\gg$, with $0$ corresponding to the original bracket $[\cdot,\cdot]$.
 \item The union of the $\GL(\gg)$-orbits passing through Lie brackets parametrized by $\Phi$ contains an open 
neighborhood of $[\cdot,\cdot]$ in the space of all Lie brackets on $\gg$ (topologized as a subset of $\hom(\wedge^2\gg,\gg)$).
\end{enumerate}
\end{theorem}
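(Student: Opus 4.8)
The plan is to apply Kuranishi's Proposition \ref{theorem:Kuranishi} to the Jacobiator, regarded as an equivariant section over the space of all brackets. Take $M:=\hom(\wedge^2\gg,\gg)$, the (finite-dimensional) vector space of skew-symmetric bilinear maps on $\gg$, on which $G:=\GL(\gg)$ acts by $(A\cdot\mu')(u,v):=A\mu'(A^{-1}u,A^{-1}v)$, and let $E:=M\times\hom(\wedge^3\gg,\gg)$ and $F:=M\times\hom(\wedge^4\gg,\gg)$ be the trivial bundles equipped with the analogous fibrewise action, which preserves the zero sections. Let $\sigma\in\Gamma(E)$ be the Jacobiator, $\sigma(\mu'):=\mathcal{J}(\mu')$, whose zero set is exactly the set of Lie brackets on $\gg$, and let $\phi\in\Gamma(\hom(E,F))$ be the vector bundle map $\phi_{\mu'}:=\delta_{\mu'}\colon C^3(\gg,\gg)\to C^4(\gg,\gg)$, which depends polynomially (in fact linearly) on $\mu'$ and is therefore a smooth bundle map. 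The base point is $x:=\mu$, the given Lie bracket, which is a zero of $\sigma$.

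First I would check the hypotheses of Proposition \ref{theorem:Kuranishi}. The $G$-action preserves the zero sections by construction; $\sigma$ is equivariant by naturality of $\mathcal{J}$, i.e. $\mathcal{J}(A\cdot\mu')=A\cdot\mathcal{J}(\mu')$; and the identity $\phi\circ\sigma=0$, that is
\[ \delta_{\mu'}\,\mathcal{J}(\mu')=0\qquad\text{for every}\quad \mu'\in\hom(\wedge^2\gg,\gg), \]
is the Bianchi identity for the Jacobiator. It holds for arbitrary $\mu'$, not only for Lie brackets, and amounts — with $\mathcal{J}(\mu')=\tfrac12[\mu',\mu']$ and $\delta_{\mu'}=[\mu',\cdot\,]$ — to the relation $[\mu',[\mu',\mu']]=0$ in the graded Lie algebra of Nijenhuis–Richardson; it can also be checked by a direct computation. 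It is precisely this identity that makes $\phi$ globally defined and that, below, cuts $\ker\phi_\mu$ down from $C^3(\gg,\gg)$ to $Z^3(\gg,\gg)$.

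Next I would invoke Proposition \ref{theorem:Kuranishi} at $x=\mu$ and identify the three maps in its dimension count with Chevalley–Eilenberg operators. By the computation in Subsection \ref{subsection:algebra_deformations_of_brackets}, the vertical derivative $\d_\mu^{\mathrm{vert}}\sigma$ coincides, up to an irrelevant sign, with $\delta_\mu\colon C^2(\gg,\gg)\to C^3(\gg,\gg)$, so $\ker\d_\mu^{\mathrm{vert}}\sigma=Z^2(\gg,\gg)$ and $\Im\d_\mu^{\mathrm{vert}}\sigma=B^3(\gg,\gg)$; the differential at the identity of the orbit map $g\mapsto g\cdot\mu$ sends $X\in\ggl(\gg)=C^1(\gg,\gg)$ to $-\delta_\mu X$, so its image is $B^2(\gg,\gg)$; and $\phi_\mu=\delta_\mu\colon C^3(\gg,\gg)\to C^4(\gg,\gg)$ has kernel $Z^3(\gg,\gg)$, which contains $B^3(\gg,\gg)$. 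Hence the submanifold $S\ni\mu$ produced by the proposition has $\dim S=\dim Z^2-\dim B^2=\dim H^2(\gg,\gg)$, the subbundle $\tilde E\subseteq E|_S$ has $\rank\tilde E=\dim Z^3-\dim B^3=\dim H^3(\gg,\gg)$, and — by the invariance remark following Proposition \ref{theorem:Kuranishi} — $T_\mu S$ is canonically isomorphic to $H^2(\gg,\gg)$ while the fibre $\tilde E_\mu$ is canonically isomorphic to $H^3(\gg,\gg)$. Choosing a chart of $S$ near $\mu$ onto an open neighborhood $U$ of $0\in H^2(\gg,\gg)$ whose differential at $\mu$ is the canonical isomorphism, and a trivialization of $\tilde E$ over it identifying the fibre at $\mu$ with $H^3(\gg,\gg)$, transports the section $\tilde\sigma$ into a smooth map $\Phi\colon U\to H^3(\gg,\gg)$. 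Properties (1)--(3) are then read off from the corresponding clauses of Proposition \ref{theorem:Kuranishi}: $\tilde\sigma$ vanishes to second order at $\mu$ yields (1); the zero set of $\tilde\sigma$ equals that of $\sigma|_S$, i.e. the Lie brackets lying on $S$, with $0$ corresponding to $\mu$, which yields (2); and every Lie bracket near $\mu$ lies in a $\GL(\gg)$-orbit through a zero of $\tilde\sigma$, which yields (3).

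The genuinely substantive points are the Bianchi identity $\delta_{\mu'}\mathcal{J}(\mu')=0$ valid for every $\mu'$ — without it the obstruction space would be $C^3/B^3$ rather than $H^3(\gg,\gg)$, and $\phi$ would not be defined off the zero set of $\sigma$ — and the careful bookkeeping that matches the three differentials of the Kuranishi count, and the canonical isomorphisms of the remark after Proposition \ref{theorem:Kuranishi}, with $\delta_\mu$ in degrees $1,2,3$ and with $H^2(\gg,\gg)$ and $H^3(\gg,\gg)$ respectively. The remaining choices of chart and trivialization are harmless, since the content of (1)--(3) involves only the invariant data attached to $S$, $\tilde E$ and $\tilde\sigma$.
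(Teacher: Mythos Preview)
Your proof is correct and follows essentially the same approach as the paper: both apply Proposition~\ref{theorem:Kuranishi} with $M=\hom(\wedge^2\gg,\gg)$, $G=\GL(\gg)$, $\sigma=\mathcal{J}$, and $\phi_{\mu'}=\delta_{\mu'}$, then identify the vertical derivative with $\delta_\mu$. Your version is more explicit than the paper's in verifying the hypothesis $\phi\circ\sigma=0$ (the Bianchi identity), in computing the differential of the orbit map, and in spelling out how the invariance remark after Proposition~\ref{theorem:Kuranishi} yields the canonical identifications $T_\mu S\cong H^2(\gg,\gg)$ and $\tilde E_\mu\cong H^3(\gg,\gg)$ that turn the Kuranishi data into the map $\Phi$ of the statement.
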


\begin{proof}
We apply Proposition \ref{theorem:Kuranishi} as follows: First,
note that $\GL(\gg)$ acts naturally on $M = \hom(\wedge^2\gg,\gg)$. This action extends
to the trivial vector bundles over $M$ with fibers $\hom(\wedge^3\gg,\gg)$ and $\hom(\wedge^4\gg,\gg)$, which we denote by $E$ and $F$,
respectively. For the section $\sigma: M \to E$ of Proposition \ref{theorem:Kuranishi} we take the Jacobiator
\[\mathcal{J}(\eta)(u,v,w) = \eta(\eta(u,v),w) + \text{ cyclic permutations}.\]
Observe that the moduli space of interest is $\J^{-1}(0)/\GL(\gg)$.

For the vector bundle morphism $\phi: E\to F$ consider
\[ \phi(\eta) = \delta_\eta: \hom(\wedge^3\gg,\gg) \to \hom(\wedge^4\gg,\gg),\]
where we use the formula for the Lie algebra differential with
$\eta$ in place of the bracket $[\cdot,\cdot]$ and with the map
$$r(u):= \eta(u,\cdot): \gg \to \gg$$
in the place of the representation.

A simple computation shows that $\d^{\mathrm{vert}}_{\mu}\J = \delta_{\mu}$, which concludes the proof.
\end{proof}

Theorem \ref{theorem: Kuranishi_Lie_brackets} was established by Nijenhuis and Richardson in \cite{RN2} (Theorem B).

\begin{remark} \rm
\hspace{0cm}
\begin{enumerate}
 \item The second jet of $\Phi$ at $0$ is given by the Kuranishi map from Subsection \ref{subsection:algebra_deformations_of_brackets}.\
 \item One can also apply Theorem \ref{theorem: Kuranishi_Lie_brackets} to solve Problem \ref{problem: rigidity} (Rigidity of Lie algebras). One obtains that if $H^2(\gg,\gg) = 0$, then $(\gg, \mu)$ is rigid. Moreover, one can apply Proposition \ref{thm: IFT}, which yields a smooth map that associates to each Lie bracket $\mu'$ close to $\mu$ an isomorphism $A \in \GL(\gg)$, such that $A\cdot \mu = \mu'$:
\end{enumerate}
\end{remark}

\begin{theorem}[Rigidity of Lie Brackets] \label{corol: rigidity bracket}
Let $(\gg, \mu)$ be a Lie algebra. If $H^2(\gg,\gg) = 0$, then there exists an open neighborhood $U \subset \wedge^2\gg^{\ast}\otimes\gg$ of $\mu$, and a smooth map $h: U \to \GL(\gg)$ such that $h(\mu')\cdot \mu = \mu'$ for every Lie bracket $\mu' \in U$. In particular,  $(\gg,\mu)$ is rigid. 
\end{theorem}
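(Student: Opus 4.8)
The plan is to derive this as a direct corollary of Proposition~\ref{thm: IFT} (openness of orbits), applied to the same geometric setup that was used in the proof of Theorem~\ref{theorem: Kuranishi_Lie_brackets}. Namely, I would take $M = \hom(\wedge^2\gg,\gg)$ with its natural $\GL(\gg)$-action $(A\cdot\eta)(u,v) = A\eta(A^{-1}u,A^{-1}v)$, the trivial bundle $E = M \times \hom(\wedge^3\gg,\gg)$ with the induced action, and the equivariant section $\sigma = \J$, the Jacobiator. The point $x = \mu$ is a zero of $\sigma$, and the zero set $\J^{-1}(0)$ is exactly the set of Lie brackets on $\gg$. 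Equivariance of $\J$ is the elementary statement that $A\cdot\eta$ satisfies Jacobi whenever $\eta$ does; this preserves the zero section since the zero bilinear map is a (trivial) Lie bracket.

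The key step is to check that $\mu$ is a \emph{non-degenerate} zero of $\sigma$ in the sense required by Proposition~\ref{thm: IFT}, i.e.\ that the sequence
$$
\xymatrix{
\ggl(\gg) \ar[r]^-{\d\mu_\mu} & T_\mu M \ar[r]^-{\d^{\mathrm{vert}}\sigma} & E_\mu
}
$$
is exact. Here $\ggl(\gg)$ is the Lie algebra of $\GL(\gg)$, $T_\mu M = \hom(\wedge^2\gg,\gg)$, and $E_\mu = \hom(\wedge^3\gg,\gg)$. First I would identify the two maps in the sequence: differentiating the orbit map $A \mapsto A\cdot\mu$ at the identity gives $\d\mu_\mu(\varphi) = [\varphi,\mu]_{\text{Nijenhuis--Richardson}}$, which unwinds to exactly $\delta_\mu\varphi$ where we regard $\varphi \in \hom(\gg,\gg) = C^1(\gg,\gg)$; this is precisely the coboundary computation already carried out in Subsection~\ref{subsection:algebra_deformations_of_brackets} (the equivalence-of-deformations argument, which showed $\dot\mu_0 - \dot\mu_0' = \delta_\mu\dot\varphi_0$). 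Similarly, the computation $\d^{\mathrm{vert}}_\mu\J = \delta_\mu$ was already noted at the end of the proof of Theorem~\ref{theorem: Kuranishi_Lie_brackets}. So the sequence in question is nothing but
$$
\xymatrix{
C^1(\gg,\gg) \ar[r]^-{\delta_\mu} & C^2(\gg,\gg) \ar[r]^-{\delta_\mu} & C^3(\gg,\gg),
}
$$
and its exactness at $C^2$ is exactly the hypothesis $H^2(\gg,\gg) = 0$ (note $\delta_\mu^2 = 0$ automatically guarantees $\Im\d\mu_\mu \subseteq \ker\d^{\mathrm{vert}}\sigma$, so only the reverse inclusion — vanishing of cohomology — is needed).

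With non-degeneracy established, Proposition~\ref{thm: IFT} immediately yields an open neighborhood $U$ of $\mu$ in $M = \wedge^2\gg^\ast\otimes\gg$ and a smooth map $h: U \to \GL(\gg)$ such that for every $\mu' \in U$ with $\J(\mu') = 0$ — that is, for every Lie bracket $\mu' \in U$ — one has $h(\mu')\cdot\mu = \mu'$. In particular every nearby Lie bracket lies in the $\GL(\gg)$-orbit of $\mu$, which is the definition of rigidity, and the ``in particular'' clause of the theorem follows. I expect the only real subtlety to be bookkeeping: making sure the identifications $\d\mu_\mu = \delta_\mu$ on $C^1$ and $\d^{\mathrm{vert}}\J = \delta_\mu$ on $C^2$ are stated cleanly (both are short computations already implicit in Section~3), and being careful that Proposition~\ref{thm: IFT}'s conclusion is phrased in terms of the zero set of $\sigma$, so one must invoke $\J^{-1}(0) = \{\text{Lie brackets}\}$ to translate ``$\sigma(\mu') = 0$'' into ``$\mu'$ is a Lie bracket.'' There is no hard analytic obstacle here — all the analysis is packaged inside Proposition~\ref{thm: IFT}; the work is purely in verifying its hypothesis reduces to $H^2(\gg,\gg)=0$.
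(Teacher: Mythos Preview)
Your proposal is correct and follows exactly the same approach as the paper: apply Proposition~\ref{thm: IFT} to the Jacobiator section on $M=\hom(\wedge^2\gg,\gg)$ with the $\GL(\gg)$-action, and identify the non-degeneracy sequence with $C^1(\gg,\gg)\xrightarrow{\delta_\mu}C^2(\gg,\gg)\xrightarrow{\delta_\mu}C^3(\gg,\gg)$ so that non-degeneracy is equivalent to $H^2(\gg,\gg)=0$. Your write-up is in fact more detailed than the paper's, which simply asserts the identification as ``a simple computation.''
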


\begin{proof}
Again we denote by $M = \hom(\wedge^2\gg,\gg)$, by $E$ the trivial vector bundle with fibers $\hom(\wedge^3\gg,\gg)$ and by $\J$ the section of $E$ given by the Jacobiator. We consider also the natural action of $\GL(\gg)$ on $\hom(\wedge^2\gg,\gg)$. The rigidity problem can be reformulated as \emph{``when is it true that for every $\mu' \in \tilde{M}$ close to $\mu$, and such that $\J(\mu') = 0$, there exists $A \in \GL(\gg)$ close the identity map such that $A\cdot\mu = \mu'$?"}. 

We apply Proposition \ref{thm: IFT}. A simple computation shows that
a Lie bracket $\mu$ is a non-degenerate zero of $\mathcal{J}$ iff the sequence
\[\xymatrix{
\hom(\gg,\gg) \ar[r]^-{\delta_{\mu}} & \hom(\wedge^2\gg,\gg) \ar[r]^-{\delta_{\mu}} & \hom(\wedge^3\gg,\gg)},\]
is exact, i.e. iff $H^2(\gg,\gg) = 0$.
\end{proof}

Finally, by applying Proposition \ref{thm: IFT2} we can solve Problem \ref{smoothness bracket}, i.e., we obtain a local smoothness result for the space of Lie brackets:

\begin{theorem} \label{corol: regular bracket}
Let $(\gg, \mu)$ be a Lie algebra. If $H^3(\gg,\gg)=0$ then the space of Lie algebra structures on $\gg$ is a manifold in a neighborhood of $\mu$ of dimension equal to the dimension of $Z^2(\gg,\gg)$.
\end{theorem}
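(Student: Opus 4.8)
The plan is to deduce Theorem \ref{corol: regular bracket} directly from the stability-of-zeros result, Proposition \ref{thm: IFT2}, applied to exactly the same set-up as in the proof of Theorem \ref{corol: rigidity bracket}. Let $M = \hom(\wedge^2\gg,\gg)$, let $E$ be the trivial bundle over $M$ with fibre $\hom(\wedge^3\gg,\gg)$, let $F$ be the trivial bundle with fibre $\hom(\wedge^4\gg,\gg)$, and let $\sigma = \J$ be the Jacobiator section of $E$. For the vector bundle map $\phi\colon E \to F$ take $\phi(\eta) = \delta_\eta$, the Chevalley--Eilenberg differential computed with $\eta$ in place of the bracket and with $r(u) := \eta(u,\cdot)$ in place of the representation, exactly as in the proof of Theorem \ref{theorem: Kuranishi_Lie_brackets}. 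The key algebraic identity one needs is that $\phi \circ \sigma = 0$, i.e. $\delta_{\mu'}\circ\J(\mu') = 0$ for every $\mu'$; this is the statement that the Jacobiator, viewed as a $3$-cochain for the (not necessarily Jacobi) bracket $\mu'$, is automatically $\delta_{\mu'}$-closed. It is a purely formal consequence of the graded Jacobi/Leibniz bookkeeping for the Chevalley--Eilenberg-type operators $\delta_{\mu',r}$, and I would verify it by a direct expansion (or invoke the standard fact that $\delta_{\mu'}^2 = \J(\mu')\,\lrcorner\,(-)$ up to sign, so that $\delta_{\mu'}\J(\mu')$ is the appropriate associator, which vanishes).

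Next I would identify the exactness hypothesis of Proposition \ref{thm: IFT2} at the point $x = \mu$. The computation $\d^{\mathrm{vert}}_\mu\J = \delta_\mu$ from the proof of Theorem \ref{theorem: Kuranishi_Lie_brackets} shows that the vertical derivative of $\sigma$ at $\mu$ is the map $\delta_\mu\colon \hom(\wedge^2\gg,\gg) \to \hom(\wedge^3\gg,\gg)$ (using that $M$ is a vector space, so $T_\mu M \cong \hom(\wedge^2\gg,\gg)$ canonically). Similarly $\phi_\mu = \delta_\mu\colon \hom(\wedge^3\gg,\gg)\to\hom(\wedge^4\gg,\gg)$. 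Hence the sequence
\[
\xymatrix{T_\mu M \ar[r]^-{\d^{\mathrm{vert}}_\mu\J} & E_\mu \ar[r]^-{\phi_\mu} & F_\mu}
\]
is precisely
\[
\xymatrix{\hom(\wedge^2\gg,\gg)\ar[r]^-{\delta_\mu} & \hom(\wedge^3\gg,\gg)\ar[r]^-{\delta_\mu} & \hom(\wedge^4\gg,\gg)},
\]
and its exactness at the middle term is exactly the vanishing of $H^3(\gg,\gg)$. So the hypothesis $H^3(\gg,\gg)=0$ is equivalent to the exactness condition \eqref{eq: exact} of Proposition \ref{thm: IFT2} at $\mu$.

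With both hypotheses of Proposition \ref{thm: IFT2} in place, part (1) of that proposition immediately gives that $\J^{-1}(0)$ — which is the set of all Lie brackets on $\gg$, topologized as a subset of $\hom(\wedge^2\gg,\gg)$ — is, in a neighbourhood of $\mu$, a smooth manifold of dimension $\dim\ker(\d^{\mathrm{vert}}_\mu\J) = \dim\ker(\delta_\mu\colon \hom(\wedge^2\gg,\gg)\to\hom(\wedge^3\gg,\gg)) = \dim Z^2(\gg,\gg)$. That is the assertion of the theorem. I would close by remarking that this recovers, as anticipated in the earlier Remark, the fact that when $H^3(\gg,\gg) = 0$ every $\xi \in Z^2(\gg,\gg)$ is tangent to a genuine deformation of $\gg$ (the tangent space to the manifold $\J^{-1}(0)$ at $\mu$ is $Z^2(\gg,\gg)$).

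The main obstacle is the identity $\phi\circ\sigma = 0$, i.e. $\delta_{\mu'}\J(\mu') = 0$ for arbitrary skew bilinear $\mu'$: unlike the derivative computation $\d^{\mathrm{vert}}_\mu\J = \delta_\mu$, which is a one-line linearization, this is a nonlinear identity and requires either a careful term-by-term expansion of $\delta_{\mu'}$ applied to the $3$-cochain $\J(\mu')$, or the cleaner route of writing $\delta_{\mu'}^2$ as interior multiplication by $\J(\mu')$ (a Cartan-type formula) and then observing that $\delta_{\mu'}^3 = \delta_{\mu'}\circ\delta_{\mu'}^2 = \delta_{\mu'}^2\circ\delta_{\mu'}$ forces the associator $\delta_{\mu'}\J(\mu')$ to vanish. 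Everything else — triviality of the bundles, the identification $T_\mu M \cong \hom(\wedge^2\gg,\gg)$, and reading off the dimension — is routine bookkeeping, so the write-up should be short once that identity is dispatched.
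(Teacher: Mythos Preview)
Your proposal is correct and follows essentially the same route as the paper: set up $M=\hom(\wedge^2\gg,\gg)$, $E=M\times\hom(\wedge^3\gg,\gg)$, $F=M\times\hom(\wedge^4\gg,\gg)$, take $\sigma=\J$ and $\phi(\eta)=\delta_\eta$, observe $\delta_{\mu'}\J(\mu')=0$, identify the sequence \eqref{eq: exact} with the Chevalley--Eilenberg complex in degrees $2$--$4$, and invoke part (1) of Proposition \ref{thm: IFT2}. The only difference is that you elaborate on how to verify the identity $\delta_{\mu'}\J(\mu')=0$, which the paper simply asserts.
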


\begin{remark} \rm
We observe that as a consequence of this theorem, one obtains that if $H^3(\gg,\gg) = 0$, then for every $\xi \in Z^2(\gg,\gg)$ there exists a smooth family of Lie bracket $\mu_t$ on $\gg$ such that $[\frac{\d}{\d t}\big{|}_{t = 0}\mu_t] = \xi$.
\end{remark}

\begin{proof}
Let $M = \hom(\wedge^2\gg,\gg)$, $E = M \times \hom(\wedge^3\gg,\gg)$, $F = M \times \hom(\wedge^4\gg,\gg)$ and consider the section $\J \in \Gamma(E)$ and the vector bundle map $\mu \mapsto \delta_{\mu} \in \Gamma(\hom(E,F))$. Then $\delta_{\mu}(\J(\mu)) = 0$ for all $\mu \in M$, and by differentiating at $\mu$ one obtains
\[\xymatrix{\hom(\wedge^2\gg,\gg) \ar[r]^-{\delta_{\mu}}& \hom(\wedge^3\gg,\gg)\ar[r]^-{\delta_{\mu}} & \hom(\wedge^4\gg,\gg),}\]
which is exact if and only if $H^3(\gg,\gg) = 0$.

Thus, in this case, by the first statement of Proposition \ref{thm: IFT2}, one obtains that the space of Lie algebra structures is smooth in a neighborhood of $\mu$.
\end{proof}

\begin{remark} \rm
Observe that Theorem \ref{corol: regular bracket} can also be obtained from Proposition \ref{theorem:Kuranishi}: Reconsider the setting of the proof of
Theorem \ref{theorem: Kuranishi_Lie_brackets} with the $\GL(\gg)$-action replaced by the trivial group action.
The corresponding Kuranishi model consists of a smooth map from an open neighborhood of $Z^2(\gg,\gg)$
to $H^3(\gg,\gg)$. If the latter space is trivial, it follows that the space of Lie brackets is locally a manifold modeled on the vector space $Z^2(\gg,\gg)$.
\end{remark}

\subsection{Homomorphisms of Lie Algebras}
We use Proposition \ref{theorem:Kuranishi} to obtain a local description of the moduli space of Lie algebra homomorphisms.

\begin{theorem}\label{theorem:Kuranishi_homomorphisms}
Let $\rho: \hh \to \gg$ be a Lie algebra homomorophism.
There is an open neighborhood $U$ of $0\in H^1(\hh,\gg)$ and a smooth map
$\Phi: U\to H^2(\hh,\gg)$ with the following properties:
\begin{enumerate}
 \item $\Phi$ and its derivative vanish at the origin.
 \item The zeros of $\Phi$ parametrize Lie algebra homomorphisms from $\hh$ to $\gg$, with $0$ corresponding to $\rho$.
 \item The union of $G$-orbits passing through homomorphisms parametrized by $\Phi$ contains an open neighborhood of $\rho$
in the space of all Lie algebra homomorphisms (topologized as a subset of $\hom(\hh,\gg)$).
\end{enumerate}
\end{theorem}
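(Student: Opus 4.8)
The plan is to derive Theorem \ref{theorem:Kuranishi_homomorphisms} as a direct application of Proposition \ref{theorem:Kuranishi} (Kuranishi models), exactly in the same spirit as the proof of Theorem \ref{theorem: Kuranishi_Lie_brackets} for Lie brackets. First I would set up the ambient manifold as $M = \hom(\hh,\gg)$, the (finite-dimensional, linear) space of all linear maps from $\hh$ to $\gg$, inside which the space of Lie algebra homomorphisms sits as the zero set of a natural section. Concretely, define $E$ to be the trivial vector bundle over $M$ with fiber $\hom(\wedge^2\hh,\gg)$, and let $\sigma \in \Gamma(E)$ be the ``homomorphism defect''
\[
\sigma(\psi)(u,v) := [\psi(u),\psi(v)]_{\gg} - \psi([u,v]_{\hh}),
\]
so that $\sigma^{-1}(0)$ is precisely the set of Lie algebra homomorphisms $\hh \to \gg$, with $\rho$ corresponding to a distinguished zero. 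The group $G$ (the simply connected integration of $\gg$) acts on $M$ by $g\cdot\psi := \Ad_g\circ\psi$, this action lifts to $E$, and the quotient $\sigma^{-1}(0)/G$ is the moduli space of homomorphisms whose tangent space at $[\rho]$ one expects (by the remark following the deformation proposition in Subsection \ref{subsection:algebra_homomorphisms}) to be $H^1(\hh,\gg)$.

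Next I would produce the vector bundle map $\phi\colon E \to F$ with $\phi\circ\sigma = 0$ needed to feed into Proposition \ref{theorem:Kuranishi}. Take $F$ to be the trivial bundle with fiber $\hom(\wedge^3\hh,\gg)$, and at a point $\psi \in M$ let $\phi_\psi := \delta_{\mu_\hh,\,\ad_\gg\circ\psi}$, i.e.\ the Chevalley--Eilenberg differential $C^2(\hh,\gg)\to C^3(\hh,\gg)$ built using the fixed bracket $\mu_\hh$ on $\hh$ and the linear map $r(u) := \ad_\gg(\psi(u))$ in place of a representation (as in Examples \ref{ex: ad pull-back}; note $\phi_\psi$ makes sense even when $\psi$ is not a homomorphism, as flagged in Section 2). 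The key computation — and this is the step I expect to be the main obstacle, though it is a routine-but-delicate identity rather than a deep difficulty — is to verify that $\phi_\psi(\sigma(\psi)) = 0$ for \emph{all} $\psi \in M$, not just for homomorphisms; this is a bracket-bookkeeping exercise analogous to the fact that $\delta_\eta\mathcal{J}(\eta)=0$ for arbitrary $\eta$ in the Lie-bracket case, and it hinges on the Jacobi identity in $\gg$ together with the fact that $\mu_\hh$ does satisfy Jacobi. Once this is in hand, one must also check the local description: $\d^{\mathrm{vert}}_\rho\sigma = \delta_\rho$ as a map $T_\rho M = \hom(\hh,\gg) \to \hom(\wedge^2\hh,\gg)$ (a one-line computation, since $\sigma$ is quadratic and $\rho$ is a homomorphism, so the linearization is exactly the cocycle operator from the deformation proposition), and that $\phi_\rho = \delta_\rho$ on $C^2$. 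Thus the relevant three-term sequence $\d\mu_\rho$, $\d^{\mathrm{vert}}_\rho\sigma$, $\phi_\rho$ is $\gg \xrightarrow{\delta_\rho} C^1(\hh,\gg)\xrightarrow{\delta_\rho} C^2(\hh,\gg)\xrightarrow{\delta_\rho} C^3(\hh,\gg)$ (with $\gg = C^0(\hh,\gg)$ and $\d\mu_\rho(\alpha) = \delta_\rho\alpha = [\alpha,\rho(\cdot)]$), so its cohomologies at the relevant spots are $H^1(\hh,\gg)$ and $H^2(\hh,\gg)$.

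With these identifications, Proposition \ref{theorem:Kuranishi} applies verbatim: it furnishes a submanifold $S\ni\rho$ of dimension $\dim\ker(\d^{\mathrm{vert}}_\rho\sigma) - \dim\Im(\d\mu_\rho) = \dim Z^1(\hh,\gg) - \dim B^1(\hh,\gg) = \dim H^1(\hh,\gg)$, a subbundle $\tilde E$ of rank $\dim\ker\phi_\rho - \dim\Im\d^{\mathrm{vert}}_\rho\sigma = \dim Z^2(\hh,\gg) - \dim B^2(\hh,\gg) = \dim H^2(\hh,\gg)$, and a section $\tilde\sigma$ of $\tilde E$ vanishing to second order at $\rho$. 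Using the invariant identifications from the remark after Proposition \ref{theorem:Kuranishi} — $T_\rho S \cong H^1(\hh,\gg)$ and $\tilde E_\rho \cong H^2(\hh,\gg)$ — and trivializing $\tilde E$ over a chart of $S$ identified with a neighborhood $U$ of $0$ in $H^1(\hh,\gg)$, one gets the desired $\Phi\colon U \to H^2(\hh,\gg)$. Property (1) (vanishing of $\Phi$ and its derivative at the origin) is exactly the second-order vanishing of $\tilde\sigma$; property (2) (zeros of $\Phi$ parametrize homomorphisms) follows from the Kuranishi conclusion that $\tilde\sigma^{-1}(0) = (\sigma|_S)^{-1}(0)$ together with $\sigma^{-1}(0) = \{\text{homomorphisms}\}$; and property (3) (the $G$-orbits through these homomorphisms cover a neighborhood of $\rho$) is precisely the second Kuranishi conclusion, that every zero of $\sigma$ near $\rho$ lies in the $G$-orbit of a zero of $\tilde\sigma$. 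I would close with a one-sentence remark that, as for Lie brackets, the second jet of $\Phi$ at $0$ recovers the Kuranishi map $\xi \mapsto \tfrac12[\xi,\xi] \bmod B^2$ of Subsection \ref{subsection:algebra_homomorphisms}, and that the theorem is due to Nijenhuis--Richardson.
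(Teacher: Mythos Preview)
Your proposal is correct and follows essentially the same approach as the paper: the same choice of $M=\hom(\hh,\gg)$, trivial bundles $E$ and $F$ with fibers $\hom(\wedge^2\hh,\gg)$ and $\hom(\wedge^3\hh,\gg)$, the homomorphism-defect section (the paper calls it $\K$), the bundle map $\phi(\varphi)=\delta_\varphi$, the identity $\phi(\varphi)(\K(\varphi))=0$, the computation $\d^{\mathrm{vert}}_\rho\K=\delta_\rho$, and then a direct appeal to Proposition~\ref{theorem:Kuranishi}. Your write-up is in fact more explicit than the paper's in spelling out how the three conclusions of Proposition~\ref{theorem:Kuranishi} translate into properties (1)--(3), but there is no substantive difference in method.
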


\begin{proof}
Let $M = \hom(\hh,\gg)$, and $E$ and $F$ be the trivial vector bundles over $M$ with fiber $\hom(\wedge^2\hh,\gg)$ and $\hom(\wedge^3\hh,\gg)$, respectively. Let $G$ be a Lie group which integrates $\gg$, and consider the adjoint action of $G$ on $M$ and $E$.

Let $\K \in\Gamma(E)$ be given by
\[\K(\varphi)(u,v) = [\varphi(u),\varphi(v)]_\gg - \varphi([u,v]_\hh),\]
and note that the zero set of $\K$ is the space of all Lie algebra homomorphisms from $\hh$ to $\gg$. Thus, the moduli space of interest is $\K^{-1}(0)/G$. Consider also the bundle map
\[\phi: E \to F, \quad \phi(\varphi)(\eta) = \delta_{\varphi}(\eta),\]
which makes sense even if $\varphi$ is not a homorphism.

A straightforward computation shows that 
\[\phi(\varphi)(\K(\varphi)) = 0 \text{ for all } \varphi \in M.\]
Moreover, one has that $\d^{\mathrm{vert}}_{\rho}\K = \delta_{\rho}$. 
Thus, a direct application of Proposition \ref{theorem:Kuranishi} concludes the proof.
\end{proof}

\begin{remark} \rm
\hspace{0cm}
\begin{enumerate}
 \item The second jet of $\Phi$ at $0$ is given by the Kuranishi map from Subsection \ref{subsection:algebra_homomorphisms}.
\item One can apply Theorem \ref {theorem:Kuranishi_homomorphisms} to solve Problem \ref{problem rigidity homomorphism} (Rigidity of Lie algebra homomorphisms). One obtains that if $H^1(\hh,\gg) = 0$, then $\rho$ is rigid. One can also apply Proposition \ref{thm: IFT}, which yields a smooth map that associates to each Lie algebra homomorphism  $\rho'$ close to $\rho$ an element $g \in G$, such that $\Ad_g \circ \rho = \rho'$.
\end{enumerate}

\end{remark}

The following Theorem is Theorem A of \cite{RN3}.

\begin{theorem}[Rigidity of Lie Algebra Homomorphisms] \label{corol: rigidity homomorphism}
Let $\rho:\hh \to \gg$ be a Lie algebra homomorphism. If $H^1(\hh,\gg) = 0$, then there exists a neighborhood $U \subset \hom(\hh,\gg)$ of $\rho$, and a smooth map $h: U \to G$ such that $\rho' = \Ad_{h(\rho')}\circ \rho$, for every Lie algebra homomorphism $\rho' \in U$. In particular, $\rho$ is rigid. 
\end{theorem}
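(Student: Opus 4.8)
The plan is to derive this theorem as a direct consequence of Proposition \ref{thm: IFT} (openness of orbits), in exactly the way the analogous Theorem \ref{corol: rigidity bracket} was derived from that proposition. First I would set up the geometric data: let $M = \hom(\hh,\gg)$, let $E$ be the trivial vector bundle over $M$ with fibre $\hom(\wedge^2\hh,\gg)$, and let $\K \in \Gamma(E)$ be the section $\K(\varphi)(u,v) = [\varphi(u),\varphi(v)]_\gg - \varphi([u,v]_\hh)$, whose zero set is precisely the space of Lie algebra homomorphisms $\hh \to \gg$. Let $G$ be the simply connected Lie group integrating $\gg$, acting on $M$ by $g\cdot\varphi = \Ad_g\circ\varphi$ and on $E$ by the induced (adjoint) action on the fibre; this action preserves the zero section, and one checks that $\K$ is equivariant. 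The point $\rho$ is a zero of $\K$ by hypothesis, and the orbit map $\mu_\rho: G\to M$, $\mu_\rho(g) = \Ad_g\circ\rho$, has differential at the identity equal to $\delta_\rho: \gg \cong C^1(\hh,\gg)\ \text{(wait --- )}$; more precisely $\d_{\mathrm{id}}\mu_\rho(\alpha) = [\alpha,\rho(\cdot)] = (\delta_\rho\alpha)$ viewing $\alpha\in\gg = C^0(\hh,\gg)$.

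The key computation, which I would state and then leave as ``a straightforward verification'' (it already appears in the proof of Theorem \ref{theorem:Kuranishi_homomorphisms}), is that the vertical derivative of $\K$ at $\rho$ is the Chevalley--Eilenberg differential: $\d^{\mathrm{vert}}_\rho\K = \delta_\rho: C^1(\hh,\gg) \to C^2(\hh,\gg)$. Combined with the identification of $\d_{\mathrm{id}}\mu_\rho$ with $\delta_\rho: C^0(\hh,\gg)\to C^1(\hh,\gg)$, the non-degeneracy sequence of Proposition \ref{thm: IFT} becomes
\[
\xymatrix{ C^0(\hh,\gg) \ar[r]^-{\delta_\rho} & C^1(\hh,\gg) \ar[r]^-{\delta_\rho} & C^2(\hh,\gg),}
\]
and exactness of this sequence at $C^1(\hh,\gg)$ is by definition the vanishing $H^1(\hh,\gg) = 0$. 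So the hypothesis of the theorem is exactly the statement that $\rho$ is a non-degenerate zero of $\K$.

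Given this, I would invoke Proposition \ref{thm: IFT} directly: it produces an open neighborhood $U$ of $\rho$ in $M = \hom(\hh,\gg)$ and a smooth map $h: U\to G$ such that for every $\rho'\in U$ with $\K(\rho') = 0$ --- i.e. every Lie algebra homomorphism $\rho'$ near $\rho$ --- one has $h(\rho')\cdot\rho = \rho'$, that is, $\Ad_{h(\rho')}\circ\rho = \rho'$. In particular the $G$-orbit of $\rho$ contains a neighborhood of $\rho$ in the space of homomorphisms, which is the definition of $\rho$ being rigid. One small point worth a sentence: to conclude rigidity in the sense of Problem \ref{problem rigidity homomorphism} one wants $h(\rho')$ close to the identity; this follows from continuity of $h$ together with the normalization $h(\rho) = \mathrm{id}$ (or can be arranged by shrinking $U$), so $\rho'$ close to $\rho$ forces $\Ad_{h(\rho')}$ close to the identity.

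I do not anticipate a serious obstacle here; the theorem is essentially a dictionary translation of Proposition \ref{thm: IFT}. The only place requiring genuine care is the identification of the two maps appearing in the non-degeneracy sequence with the degree-$0$ and degree-$1$ pieces of $\delta_\rho$ --- in particular checking that $\d^{\mathrm{vert}}_\rho\K = \delta_\rho$ and that $\d_{\mathrm{id}}\mu_\rho$ is the correct coboundary $C^0\to C^1$. Both are short bracket computations using only the Jacobi identity in $\gg$ and the fact that $\rho$ is a homomorphism, and since the former was already recorded in the proof of Theorem \ref{theorem:Kuranishi_homomorphisms}, I would simply cite it rather than repeat it.
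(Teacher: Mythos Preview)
Your proposal is correct and follows essentially the same approach as the paper's proof: set up $M=\hom(\hh,\gg)$, the section $\K$ with zero set the homomorphisms, the adjoint $G$-action, identify the non-degeneracy sequence of Proposition~\ref{thm: IFT} with $\gg \xrightarrow{\delta_\rho} \hom(\hh,\gg) \xrightarrow{\delta_\rho} \hom(\wedge^2\hh,\gg)$, and conclude. Your write-up is in fact slightly more explicit than the paper's (you spell out the identifications of $\d_{\mathrm{id}}\mu_\rho$ and $\d^{\mathrm{vert}}_\rho\K$ and address the closeness of $h(\rho')$ to the identity), but the argument is the same.
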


\begin{proof}
Again we set $M = \hom(\hh,\gg)$, $E = M \times \hom(\wedge^2\hh,\gg)$, and we consider the curvature map
\[\mathcal{K}:\hom(\hh,\gg) \longrightarrow \hom(\wedge^2\hh,\gg)\]
defined in the proof of Theorem \ref{theorem:Kuranishi_homomorphisms}, and the $G$ action on $\hom(\hh,\gg)$ via the adjoint representation.

The rigidity problem can be reformulated as \emph{``when is it true that for every $\rho' \in M$ close to $\rho$, and such that $\K(\rho') = 0$, there exists $g \in G$, close the identity, such that $\Ad_g\circ\rho = \rho'$?"}. 

We apply Proposition \ref{thm: IFT} to this situation.
A simple computation shows that the homorphism $\rho$ is a non-degenerate zero of $\mathcal{K}$ iff the sequence
\[\xymatrix{
\gg \ar[r]^-{\delta_{\rho}} & \hom(\hh,\gg) \ar[r]^-{\delta_{\rho}} & \hom(\wedge^2\hh,\gg)},\]
is exact, i.e. iff $H^1(\hh,\gg) = 0$.
\end{proof}

The action of $G$ on $\hom(\hh,\gg)$ factors through an action of the group of automorphisms $\mathrm{Aut}(\gg)$ of the Lie algebra $\gg$, i.e.
$$ \mathrm{Aut}(\gg):= \{A \in \GL(\gg): A\cdot \mu = \mu \}.$$
One can ask for a condition which implies that $\rho: \hh \to \gg$ is rigid {\em with respect to} the action of $\mathrm{Aut}(\gg)$
on the space of Lie algebra homomorphisms.

As a slight variation of Theorem \ref{corol: rigidity homomorphism} we obtain:

\begin{theorem} \label{corol: stable by Aut}
Let $\rho:\hh \to \gg$ be a Lie algebra homomorphism. If $$H^1(\rho^*): H^1(\gg,\gg) \to H^1(\hh,\gg)$$ is surjective, $\rho$ is rigid
with respect to the action of $\mathrm{Aut}(\gg)$.
\end{theorem}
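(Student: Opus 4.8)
The plan is to mimic the proof of Theorem \ref{corol: rigidity homomorphism}, but to replace the group $G$ (acting on $\hom(\hh,\gg)$ via $\Ad$) by the subgroup $\Aut(\gg) \subseteq \GL(\gg)$, and to apply Proposition \ref{thm: IFT} to this new action. Concretely, I would keep $M = \hom(\hh,\gg)$, $E = M \times \hom(\wedge^2\hh,\gg)$, the curvature section $\K(\varphi)(u,v) = [\varphi(u),\varphi(v)]_\gg - \varphi([u,v]_\hh)$, and note that the zero set of $\K$ is the space of Lie algebra homomorphisms. The action of $\Aut(\gg)$ on $M$ by post-composition, $A \cdot \varphi := A \circ \varphi$, clearly preserves this zero set, and $\K$ is equivariant for it. So Proposition \ref{thm: IFT} will apply as soon as I verify that $\rho$ is a \emph{non-degenerate} zero of $\K$ relative to this action, i.e. that the sequence
\[
\xymatrix{
\mathrm{Der}(\gg) \ar[r]^-{\d\mu_\rho} & \hom(\hh,\gg) \ar[r]^-{\d^{\mathrm{vert}}_\rho\K} & \hom(\wedge^2\hh,\gg)
}
\]
is exact at the middle term. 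Here the Lie algebra of $\Aut(\gg)$ is the space $\Der(\gg)$ of derivations of $\gg$, and $\d\mu_\rho: \Der(\gg) \to \hom(\hh,\gg)$ sends a derivation $D$ to $D \circ \rho$; the vertical derivative is again $\d^{\mathrm{vert}}_\rho\K = \delta_\rho$, exactly as computed in Theorem \ref{theorem:Kuranishi_homomorphisms}.

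The core of the argument is therefore a purely cohomological identification: the image of $\d\mu_\rho$ equals $\ker \delta_\rho = Z^1(\hh,\gg)$ precisely when $H^1(\rho^*): H^1(\gg,\gg) \to H^1(\hh,\gg)$ is surjective. To see this I would unwind the relevant pullback map on cochains. Recall that $H^1(\gg,\gg)$ is computed from $\delta_\mu: \gg = C^0(\gg,\gg) \to C^1(\gg,\gg) = \hom(\gg,\gg)$, whose image $B^1(\gg,\gg)$ consists of the inner derivations $\ad_\xi$, and whose kernel in degree $1$ is $Z^1(\gg,\gg) = \Der(\gg)$. The pullback $\rho^*: C^1(\gg,\gg) \to C^1(\hh,\gg)$ is $\varphi \mapsto \varphi \circ \rho$ — which is exactly $\d\mu_\rho$ restricted to $\Der(\gg) = Z^1(\gg,\gg)$ — and it is a chain map, so it descends to $H^1(\rho^*)$. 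Now $\rho^*$ carries $B^1(\gg,\gg)$ into $B^1(\hh,\gg)$: indeed $\ad_\xi \circ \rho = \delta_\rho(\xi)$ as a quick check of the formulas shows (this is the computation already implicit in the proof that $\rho^*$ is a chain map). Hence $\Im(\d\mu_\rho) = \rho^*(Z^1(\gg,\gg))$ always contains $B^1(\hh,\gg) = \Im \delta_\rho$, and the quotient $\Im(\d\mu_\rho)/B^1(\hh,\gg)$ is precisely the image of $H^1(\rho^*)$ inside $H^1(\hh,\gg)$. Therefore $\Im(\d\mu_\rho) = Z^1(\hh,\gg) = \ker \delta_\rho$ if and only if that image is all of $H^1(\hh,\gg)$, i.e. if and only if $H^1(\rho^*)$ is surjective — which is exactly the hypothesis. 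This establishes non-degeneracy, and Proposition \ref{thm: IFT} then yields an open neighborhood $U$ of $\rho$ in $\hom(\hh,\gg)$ together with a smooth map $h: U \to \Aut(\gg)$ such that $h(\rho') \circ \rho = \rho'$ for every homomorphism $\rho' \in U$; in particular $\rho$ is rigid with respect to $\Aut(\gg)$.

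The one genuinely delicate point — and the step I expect to need the most care — is the identification of the Lie algebra of $\Aut(\gg)$ with $\Der(\gg)$ together with the claim that $\d\mu_\rho(D) = D\circ\rho$; this is standard but worth stating cleanly, since $\Aut(\gg)$ is cut out inside $\GL(\gg)$ by the quadratic equations $A\cdot\mu = \mu$, and one should remark that it is a closed (embedded) Lie subgroup so that Proposition \ref{thm: IFT} applies verbatim. Apart from that, everything is a transcription of the proof of Theorem \ref{corol: rigidity homomorphism} with $\gg$ replaced by $\Der(\gg)$ and $B^1$ replaced by $\Im(\d\mu_\rho)$, followed by the elementary diagram-chase above identifying surjectivity of $H^1(\rho^*)$ with the exactness we need. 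I would also note, as a sanity check, that when $H^1(\gg,\gg) = 0$ — i.e. $\gg$ has only inner derivations — surjectivity of $H^1(\rho^*)$ reduces to $H^1(\hh,\gg) = 0$, so Theorem \ref{corol: stable by Aut} genuinely generalizes Theorem \ref{corol: rigidity homomorphism} in that case.
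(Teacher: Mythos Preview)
Your proposal is correct and follows essentially the same approach as the paper: both replace the $G$-action by the $\Aut(\gg)$-action, identify the Lie algebra of $\Aut(\gg)$ with $Z^1(\gg,\gg)=\Der(\gg)$, and apply Proposition \ref{thm: IFT} by checking exactness of $Z^1(\gg,\gg)\xrightarrow{\rho^*}\hom(\hh,\gg)\xrightarrow{\delta_\rho}\hom(\wedge^2\hh,\gg)$. Your write-up is in fact more explicit than the paper's, which simply asserts the final equivalence; you unpack it by observing that $\rho^*$ maps $B^1(\gg,\gg)$ onto $B^1(\hh,\gg)$ (via $\rho^*(\delta_\mu\xi)=\delta_\rho\xi$), so that $\rho^*(Z^1(\gg,\gg))/B^1(\hh,\gg)=\Im H^1(\rho^*)$, which is exactly the missing step a careful reader would want.
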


\begin{proof}
Since $\mathrm{Aut}(\gg)$ is a closed subgroup of the Lie group $\GL(\gg)$, it is a Lie subgroup and its tangent space
$\mathrm{aut}(\gg)$ is given by 
$$\{a\in \End(\gg): [a(x),y] + [x,a(y)] = a([x,y]) \quad \forall x,y \in \gg\}.$$
This is exactly the kernel $Z^1(\gg,\gg)$ of $\delta: \hom(\gg,\gg) \to \hom(\wedge^2\gg,\gg)$.

We apply Proposition \ref{thm: IFT}. A simple computation shows that $\rho$ is a non-degenerate zero of
$\K$ (with respect to the action of $\mathrm{Aut}(\gg)$) iff 
\[\xymatrix{
Z^1(\gg,\gg) \ar[r]^-{\rho^*} & \hom(\hh,\gg) \ar[r]^-{\delta_{\rho}} & \hom(\wedge^2\hh,\gg)},\]
is exact, where $\rho^*: \hom(\wedge^k \gg,\gg) \to \hom(\wedge^k \hh,\gg)$ is the chain map defined by
$$ (\rho^*\omega)(v_1,\cdots,v_k) := \omega(\rho(v_1),\cdots,\rho(v_k)).$$
Hence, non-degeneracy of $\rho$ is equivalent to $\rho^*$
mapping $Z^1(\gg,\gg)$ surjectively onto the kernel of $\delta_\rho: \hom(\hh,\gg)\to \hom(\wedge^2\hh,\gg)$.
This condition is equivalent to the surjectivity of the map
$$ H^1(\rho^*): H^1(\gg,\gg) \to H^1(\hh,\gg)$$
induced on cohomology.
\end{proof}

\begin{remark} \rm
If one takes the action by $\mathrm{Aut}(\gg)$ on $\hom(\hh,\gg)$ into account, one obtains
a Kuranishi model where $H^1(\hh,\gg)$ is replaced by $H^1(\hh,\gg) / \Im H^1(\gg,\gg)$.
Observe that also the last statement of Theorem \ref{theorem:Kuranishi_homomorphisms} changes, since
the $G$-orbits are replaced by the larger $\Aut(\gg)$-orbits.
\end{remark}

By applying Proposition \ref{thm: IFT2} we can solve also Problem \ref{problem: stability homomorphisms} (Stability of Lie algebra homomorophisms):

\begin{theorem}[Stability of Lie Algebra Homomorphisms]\label{corol: stability homomorphism}
Let $\rho: \hh \to \gg$ be a homomorphism of Lie algebras. If $H^2(\hh,\gg) = 0$, then $\rho$ is stable. Moreover, in this case the space of Lie algebra homomorphisms from $\hh$ to $\gg$ is locally a manifold of dimension equal to the dimension of $Z^1(\hh,\gg)$. 
\end{theorem}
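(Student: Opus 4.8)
The plan is to apply Proposition \ref{thm: IFT2} to the same geometric setup used in the proof of Theorem \ref{theorem:Kuranishi_homomorphisms}. Set $M = \hom(\hh,\gg)$, let $E = M \times \hom(\wedge^2\hh,\gg)$ and $F = M \times \hom(\wedge^3\hh,\gg)$ be the trivial bundles, let $\sigma = \K$ be the curvature section whose zeros are exactly the Lie algebra homomorphisms $\hh \to \gg$, and let $\phi: E \to F$ be the bundle map $\phi(\varphi)(\eta) = \delta_{\varphi}\eta$. As recorded in the proof of Theorem \ref{theorem:Kuranishi_homomorphisms}, one has $\phi(\varphi)(\K(\varphi)) = 0$ for all $\varphi \in M$ (the Bianchi-type identity), and $\d^{\mathrm{vert}}_{\rho}\K = \delta_{\rho}$. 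Hence the sequence \eqref{eq: exact} at the point $x = \rho$ becomes
\[\xymatrix{\hom(\hh,\gg) \ar[r]^-{\delta_{\rho}}& \hom(\wedge^2\hh,\gg)\ar[r]^-{\delta_{\rho}} & \hom(\wedge^3\hh,\gg),}\]
whose exactness at the middle term is precisely the statement $H^2(\hh,\gg) = 0$.

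Granting this, parts (1) and (2) of Proposition \ref{thm: IFT2} apply directly. Part (1) says that $\K^{-1}(0)$ — the space of homomorphisms $\hh \to \gg$ — is, near $\rho$, a manifold of dimension $\dim \ker(\d^{\mathrm{vert}}_{\rho}\K) = \dim \ker \delta_{\rho} = \dim Z^1(\hh,\gg)$, which is the second assertion of the theorem. For the first assertion (stability), I would recall that a deformation $\gg'$ of the Lie bracket on $\gg$ produces a deformed curvature section: replacing $[\cdot,\cdot]_{\gg}$ by the nearby bracket $\mu'$ gives a section $\sigma' = \K'$ of the same bundle $E$, $\mathcal{C}^0$-close (indeed $\mathcal{C}^\infty$-close) to $\K$ when $\mu'$ is close to $\mu$, and a correspondingly deformed bundle map $\phi' = \delta_{\varphi}^{\mu'}$ close to $\phi$, still satisfying $\phi' \circ \sigma' = 0$ by the same identity applied to $\mu'$. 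Part (2) of Proposition \ref{thm: IFT2} then yields $\rho' \in M$ close to $\rho$ with $\K'(\rho') = 0$, i.e. a homomorphism $\rho': \hh \to (\gg,\mu')$ close to $\rho$. This is exactly the content of Problem \ref{problem: stability homomorphisms}, so $\rho$ is stable.

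The only points needing genuine verification are the two ``straightforward computations'' imported from Theorem \ref{theorem:Kuranishi_homomorphisms}: the identity $\delta_{\varphi}(\K(\varphi)) = 0$ for arbitrary linear $\varphi$ (not just homomorphisms), and $\d^{\mathrm{vert}}_{\rho}\K = \delta_{\rho}$. The first is a direct expansion: writing out $\delta_{\varphi}(\K(\varphi))(u,v,w)$ using $r(u) = \ad_{\varphi(u)}$ in the Chevalley--Eilenberg formula, the terms reorganize into a Jacobi-type cancellation in $\gg$ together with the Jacobi identity in $\hh$, and everything vanishes. The second follows by differentiating $\K(\rho + t\xi)(u,v) = [\rho(u)+t\xi(u),\rho(v)+t\xi(v)]_{\gg} - (\rho+t\xi)([u,v]_{\hh})$ at $t = 0$, which gives $[\xi(u),\rho(v)] + [\rho(u),\xi(v)] - \xi([u,v]) = (\delta_{\rho}\xi)(u,v)$. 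Neither is an obstacle; the main conceptual step is simply recognizing that stability is the ``$\mathcal{C}^0$-perturb the section and the bundle map'' scenario of Proposition \ref{thm: IFT2}, with the perturbation coming from deforming the bracket on the codomain, and that the hypothesis $H^2(\hh,\gg) = 0$ is exactly the exactness condition \eqref{eq: exact} required there.
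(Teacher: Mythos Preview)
Your proof is correct and follows essentially the same approach as the paper: both apply Proposition \ref{thm: IFT2} to the setup $M = \hom(\hh,\gg)$ with the curvature section $\K_\mu$ and the bundle map $\phi_\mu = \delta_{\mu,\varphi}$, observing that the exactness hypothesis becomes $H^2(\hh,\gg)=0$ and that a nearby Lie bracket $\mu'$ yields a nearby pair $(\K_{\mu'},\phi_{\mu'})$ still satisfying $\phi_{\mu'}\circ \K_{\mu'}=0$. Your write-up even spells out the two computations ($\delta_\varphi \K(\varphi)=0$ and $\d^{\mathrm{vert}}_\rho\K=\delta_\rho$) that the paper leaves as ``straightforward''.
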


\begin{proof}
We apply Proposition \ref{thm: IFT2}. Again, let $M = \hom(\hh,\gg)$, and $E$ and $F$ be the trivial vector bundles over $M$ with fiber $\hom(\wedge^2\hh,\gg)$ and $\hom(\wedge^3\hh,\gg)$, respectively.

Denote by $\Lambda$ the space of all Lie algebra structures on $\gg$. Then for each $\mu \in \Lambda$ we obtain a section $\K_{\mu} \in\Gamma(E)$ given by
\[\K_{\mu}(\varphi)(u,v) = \mu(\varphi(u),\varphi(v)) - \varphi([u,v]),\]
and a bundle map
\[\phi_{\mu}: E \to F, \quad \phi_{\mu}(\varphi)(\eta) = \delta_{\mu,\varphi}(\eta),\]
which makes sense even if $\varphi$ is not a homorphism. Note that the map
\[\Lambda \longrightarrow \Gamma(E)\times \Gamma(\hom(E,F)), \quad \mu \longmapsto  (\K_{\mu}, \phi_{\mu})\]
is continuous if we endow the space of sections with the $\mathcal{C}^1$-topology.

Just as in the proof of Theorem \ref{theorem:Kuranishi_homomorphisms}, a straightforward computation shows that 
\[\phi_{\mu}(\varphi)(\K_{\mu}(\varphi)) = 0 \text{ for all } \varphi \in M.\]
In order to apply Proposition \ref{thm: IFT2}, we must check when the linear sequence
\[\xymatrix{T_{\rho}M \ar[r]^{\d^{\mathrm{vert}}_{\rho}\K}& E_{\rho}\ar[r]^{\phi_{\rho}} & F_{\rho}}\]
is exact. But this sequence is precisely
\[\xymatrix{\hom(\hh,\gg) \ar[r]^-{\delta_{\mu,\rho}}& \hom(\wedge^2\hh,\gg)\ar[r]^-{\delta_{\mu,\rho}} & \hom(\wedge^3\hh,\gg),}\]
which is exact if and only if $H^2(\hh,\gg) = 0$.

Thus, by Theorem \ref{thm: IFT2}, for each $\mu' \in \Lambda$ sufficiently close to $\mu$, there exists a $\rho' \in M$ close to $\rho$ such that $\K_{\mu'}(\rho') = 0$. This concludes the proof.
\end{proof}

Theorem \ref{corol: stability homomorphism} was proven by Richardson in \cite{RN5} (Theorem 7.2).

\begin{remark} \rm
The stability problems are more subtle than the rigidity ones. To justify this statement, we present here a straightforward approach to the stability problem for homomorphisms which leads to an infinitesimal condition. However, we are not able to prove that this condition implies stability (and we do not know if it is true or not).

Let $\Lambda$ be the space of all Lie brackets on $\gg$, and let $N$ be the space of all pairs $(\eta, \varphi)$ consisting of a Lie bracket $\eta$ on $\gg$, and a homorphism $\varphi: \hh \to (\gg,\eta)$. Then the stability problem can be rephrased as: \emph{``is the map $\pr_1: N \to \Lambda$ locally surjective in a neighborhood of $(\mu, \rho)$?"}.

Note that $\Lambda$ sits inside $\tilde{\Lambda} = \hom(\wedge^2\gg,\gg)$ as the zero set of the Jacobiator
\[\J:\tilde{\Lambda} \longrightarrow \hom(\wedge^3\gg,\gg),\]
and $N$ sits inside $\tilde{N} = \hom(\wedge^2\gg,\gg)\times \hom(\hh,\gg)$ as the zero set of a map
\[\Phi:\tilde{N} \longrightarrow \hom(\wedge^3\gg,\gg)\times \hom(\wedge^2\hh,\gg)\]
given by $\Phi(\mu, \rho) = (\J(\mu),\K_{\mu}(\rho))$, where
\[\K_{\mu}(\rho)(u,v) = \mu(\rho(u),\rho(v)) - \rho([u,v]).\]

The infinitesimal condition for stability is the surjectivity of the map 
\[\d_{(\mu,\rho)}\pr_1: \ker \d_{(\mu,\rho)}\Phi \to \ker \d_{\mu}\J. \]
We already know that $\d_{\mu}\J = \delta_{\mu}$. If we compute $\d_{(\mu,\rho)}\Phi$ we obtain
\[\d_{(\mu,\rho)}\Phi(\eta,\varphi) = (\delta_{\mu}\eta, \rho^{\ast}\eta + \delta_{\rho}(\varphi)).\]
Thus, the surjectivity amounts to finding for each $\eta \in \hom(\wedge^2\gg,\gg)$ which is $\delta_{\mu}$-closed, a map $\varphi \in \hom(\hh,\gg)$ such that
\[ \rho^{\ast}\eta = \delta_{\rho}(\varphi).\]
In other words, $\d_{\mu,\rho}\pr_1$ is surjective if and only if
\begin{equation}\label{eq: vanishing}
H^2(\rho^{\ast}):H^2(\gg,\gg) \longrightarrow H^2(\hh,\gg)
\end{equation}
is trivial.

However we do not know how to show (or whether it is true) that the vanishing of \eqref{eq: vanishing} implies that $\rho$ is stable. Instead, we have  shown that if $H^2(\hh,\gg) = 0$, then $\rho$ is stable. Note that when $H^2(\gg,\gg) = 0$, it is trivial to verify that $\rho$ is stable, because in this case $\gg$ is rigid by Theorem \ref{corol: rigidity bracket}.
\end{remark}

\subsection{Lie Subalgebras}
Let $\hh \subset \gg$ be a Lie subalgebra of dimension $k$. We begin by giving the Kuranishi description of the local moduli space of Lie subalgebras near $\hh$.

\begin{theorem}\label{theorem:Kuranishi_subalgebras}
Let $\hh \subset \gg$ be a Lie subalgebra of dimension $k$.
There is an open neighborhood $U$ of $0 \in H^1(\hh,\gg/\hh)$ and a smooth map $\Phi: U\to H^2(\hh, \gg/\hh)$
with the following properties:
\begin{enumerate}
 \item $\Phi$ and its derivative vanish at the origin.
 \item The zeros of $\Phi$ parametrize Lie subalgebras of $\gg$ of dimension $k$, with $0$ corresponding to $\hh$.
 \item The union of the $G$-orbits passing through the Lie subalgebras parametrized by $\Phi$ contains an open neighborhood of $\hh$
in the space of all $k$-dimensional Lie subalgebras of $\gg$ (topologized as a subet of the Grassmannian of $\gg$).
\end{enumerate}

\end{theorem}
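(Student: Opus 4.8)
The plan is to deduce Theorem \ref{theorem:Kuranishi_subalgebras} from Proposition \ref{theorem:Kuranishi} (Kuranishi models) exactly as Theorems \ref{theorem: Kuranishi_Lie_brackets} and \ref{theorem:Kuranishi_homomorphisms} were deduced, the only extra work being the set-up of the ambient manifold, the bundles, the section $\sigma$ and the bundle map $\phi$, together with the identification of the relevant vertical derivative. First I would fix a splitting $\sigma_0: \gg/\hh \to \gg$ of the short exact sequence $\hh \to \gg \to \gg/\hh$ and use the induced chart $\psi_{\sigma_0}: \hom(\hh,\gg/\hh) \to \mathrm{Gr}_k(\gg)$ (as in the proof of the last Proposition of Section 3) to realize a neighborhood of $\hh$ in the Grassmannian as an open subset $M \subseteq \hom(\hh,\gg/\hh)$; since $\psi_{\sigma_0}$ has the identity as differential at $0$, the tangent space $T_{\hh}\mathrm{Gr}_k(\gg)$ is canonically $\hom(\hh,\gg/\hh)$. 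The $G$-action on $\mathrm{Gr}_k(\gg)$ (via $\Ad$) then acts on $M$; note this action does not literally preserve the chart domain, so I would either shrink $M$ and argue as in Proposition \ref{theorem:Kuranishi} (where one only needs the action defined near $x$), or phrase everything intrinsically on $\mathrm{Gr}_k(\gg)$ and pass to trivializations afterward.

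Next I would define the obstruction section. For $\eta \in M \subseteq \hom(\hh,\gg/\hh)$, write $\eta_\sigma = \sigma_0 \circ \eta: \hh \to \gg$; the graph $\hh_\eta := \{x + \eta_\sigma(x): x \in \hh\}$ is the corresponding $k$-plane, and $\hh_\eta$ is a subalgebra precisely when, for all $x,y \in \hh$, $[x+\eta_\sigma(x), y + \eta_\sigma(y)]$ lies in $\hh_\eta$. Projecting $\gg = \hh \oplus \sigma_0(\gg/\hh)$, this is measured by the map
\[
\S(\eta)(u,v) := \bigl[\,[(u+\eta_\sigma(u)),(v+\eta_\sigma(v))]\,\bigr]_{\gg/\hh} - \eta\bigl(\pi_\hh^{\sigma_0}[(u+\eta_\sigma(u)),(v+\eta_\sigma(v))]\bigr),
\]
which takes values in $E := M \times \hom(\wedge^2\hh,\gg/\hh)$; its zero set is exactly the set of $k$-dimensional subalgebras near $\hh$, and this expression is precisely the one appearing in the proof of the last Proposition of Section 3 (the $\Phi_\sigma(\eta)$ formula), so $\d^{\mathrm{vert}}_0 \S = \delta_{\hh}$ by the cocycle computation already carried out there. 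For the bundle map $\phi: E \to F$ with $F := M \times \hom(\wedge^3\hh,\gg/\hh)$ I would take $\phi(\eta) = \delta_{\eta}$ built from the Chevalley–Eilenberg-type differential associated to the bracket on $\hh_\eta$ and the $\hh_\eta$-action on $\gg/\hh_\eta$, all transported back through the graph identification; the identity $\phi(\eta)\circ \S(\eta) = 0$ is the statement that $\S(\eta)$ is always a cocycle, which is fact (1) that the reader was asked to check after the definition of $\Phi_\sigma$ (or, more robustly, it is automatic because $\S(\eta)$ computes a genuine obstruction living in a cohomology). Then Proposition \ref{theorem:Kuranishi} applies verbatim: it produces a submanifold $S \subseteq M$ with $T_{\hh}S \cong \ker(\d^{\mathrm{vert}}_0\S)/\Im \d\mu_{\hh} = Z^1(\hh,\gg/\hh)/B^1(\hh,\gg/\hh) = H^1(\hh,\gg/\hh)$, a bundle $\tilde E$ with fiber $\ker \phi_{\hh}/\Im \d^{\mathrm{vert}}_0\S = Z^2(\hh,\gg/\hh)/B^2(\hh,\gg/\hh) = H^2(\hh,\gg/\hh)$, and a section $\tilde\sigma$ vanishing to second order; setting $\Phi := \tilde\sigma$ in suitable trivializations gives a map $U \subseteq H^1(\hh,\gg/\hh) \to H^2(\hh,\gg/\hh)$ with property (1), property (2) (its zeros parametrize the subalgebras, $0 \leftrightarrow \hh$) from the coincidence of zero sets in Proposition \ref{theorem:Kuranishi}, and property (3) (the $G$-orbits through these cover a neighborhood) from the orbit statement in Proposition \ref{theorem:Kuranishi}.

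I expect the main obstacle to be purely bookkeeping rather than conceptual: namely, checking that the bundle map $\phi$ can be defined globally and smoothly over $M$ (not just at $\hh$) so that $\phi \circ \S \equiv 0$ holds identically, and that the vertical derivative of $\S$ at $\hh$ really is $\delta_{\hh}$ on the nose under the chart identification — both of these are exactly the "two facts left to the reader" and the $t^1$, $t^2$ expansion already performed in Subsection \ref{subsection:algebra_subalgebras}, so I would simply cite that computation. A secondary point to handle carefully is that the $\Ad_G$-action is only locally defined in the chart, but since Proposition \ref{theorem:Kuranishi} is a local statement around $x = \hh$ — it only uses the map $\mu_x: G \to M$ near the identity — this causes no difficulty. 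Finally, I would remark (paralleling the remarks after Theorems \ref{theorem: Kuranishi_Lie_brackets} and \ref{theorem:Kuranishi_homomorphisms}) that the second jet of $\Phi$ at $0$ is the Kuranishi map $\Phi$ of Subsection \ref{subsection:algebra_subalgebras}, which follows from the invariance of the quadratic part noted in the remark after Proposition \ref{theorem:Kuranishi}.
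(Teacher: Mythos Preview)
Your proposal is correct and follows the same overall strategy as the paper --- apply Proposition \ref{theorem:Kuranishi} to a section whose zero set cuts out the $k$-dimensional subalgebras --- but the concrete set-up differs. The paper works intrinsically on the full Grassmannian $M = \mathrm{Gr}_k(\gg)$ with the tautological (non-trivial) bundles $E_V = \hom(\wedge^2V,\gg/V)$ and $F_V = \hom(\wedge^3V,\gg/V)$; the section is simply $\sigma(V)(u,v) = \pi([u,v])$, and an inner product on $\gg$ is used to obtain a splitting $s:\gg/V\to\gg$ \emph{uniformly for every} $V$, so that the bundle map $\phi$ and the identity $\phi(V)(\sigma(V))=0$ (a direct consequence of the Jacobi identity in $\gg$) are defined globally and the $\Ad_G$-action is manifestly global and equivariant. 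Your version instead fixes one splitting at $\hh$, passes to the graph chart, and works with trivial bundles over an open set of $\hom(\hh,\gg/\hh)$; this forces you to transport the $\hh_\eta$-structures back through the chart and to argue separately that the only-locally-defined $G$-action causes no harm. Both routes land on the same Kuranishi model, but the paper's intrinsic formulation sidesteps precisely the two bookkeeping issues you flag (global smoothness of $\phi$ with $\phi\circ\sigma\equiv 0$, and locality of the action), at the cost of introducing an auxiliary inner product.
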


\begin{proof}
We apply Proposition \ref{theorem:Kuranishi}. Let $M$ be the Grassmannian of $k$-planes in $\gg$, and let $E$ and $F$ be the vector bundles with fiber over $V\subset \gg$ given by $E_V = \hom(\wedge^2V,\gg/V)$ and $F_V= \hom(\wedge^3V,\gg/V)$. Finally, fix an inner product on $\gg$, which -- for each subspace $V \subset \gg$ -- yields a splitting $s:\gg/V \to \gg$ of the exact sequence
\[\xymatrix{0 \ar[r] & V \ar[r] & \gg \ar[r]^-{\pi} & \gg/V \ar[r] &0.}\]
We denote the corresponding projection of $\gg$ on $V$ by $\omega_s = Id_{\gg} - s\circ \pi$.

Consider the action of $G$ on $M$ and on $E$ induced by the adjoint representation, and the section
\[\sigma: M \longrightarrow E, \quad \sigma(V)(u,v) =  \pi([u,v]).\]
Note that a subspace $V$ is a Lie subalgebra iff $\sigma(V)= 0$, and thus the moduli space of interest is $\sigma^{-1}(0)/G$.
 
Next, consider the vector bundle map
\[\phi: E \longrightarrow F\] 
given by
\[\phi(V)(\eta)(u,v,w) = \pi([s(\eta(u,v)),w]) + \eta(\omega_s([u,v]),w) +\text{ cyclic permutations}.\] 
It follows from the Jacobi identity that 
\[\phi(V)(\sigma(V)) = 0 \text{ for all } V\subset \gg.\]

Moreover, by differentiating at $\hh \in M$, we obtain
\[\d_{\hh}^{\mathrm{vert}}\sigma = \delta_{\hh}\]
from where the proof follows.
\end{proof}

The construction of the Kuranishi model for Lie subalgebras is explained in \cite{RN1}, Section 8.

\begin{remark} \rm
\hspace{0cm}
\begin{enumerate}
\item The second jet of $\Phi$ at $0$ is given by the Kuranishi map from Subsection \ref{subsection:algebra_subalgebras}.
\item One can apply Theorem \ref{theorem:Kuranishi_subalgebras} to solve Problem \ref{problem rigidity subalgebra} (Rigidity of Lie subalgebras). In fact, one obtains that if $H^1(\hh,\gg/\hh) = 0$, then $\hh$ is rigid. Moreover, one can apply Proposition \ref{thm: IFT} in order to obtain a smooth map from an open neighborhood of $\hh$ in the Grassmannian of $k$-planes in $\gg$ to $G$ which associates to each Lie subalgebra $\hh'$ close to $\hh$ an element $h(\hh') \in G$, such that $\Ad_{h(\hh')} \circ \hh = \hh'$:  \
\
\end{enumerate}

\end{remark}

\begin{theorem}\label{corol: rigidity subalgebra}
Let $\hh \subset \gg$ be a Lie subalgebra. If $H^1(\hh,\gg/\hh) = 0$, then there exists a neighborhood $U$ of $\hh$ in the Grassmannian of $k$-planes in $\gg$, and a smooth map $h: U \to G$ such that $\hh' = \Ad_{h(\hh')}(\hh)$, for every Lie subalgebra $\hh' \in U$. In particular,  $\hh$ is a rigid subalgebra. 
\end{theorem}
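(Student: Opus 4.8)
The plan is to deduce Theorem \ref{corol: rigidity subalgebra} directly from Proposition \ref{thm: IFT} (openness of orbits), mimicking the proof of Theorem \ref{corol: rigidity homomorphism}. The setup is exactly the one assembled in the proof of Theorem \ref{theorem:Kuranishi_subalgebras}: take $M = \mathrm{Gr}_k(\gg)$, let $E$ be the vector bundle over $M$ with fiber $E_V = \hom(\wedge^2 V,\gg/V)$, let $G$ be the simply connected Lie group integrating $\gg$ acting on $M$ and on $E$ via the adjoint representation, and let $\sigma \in \Gamma(E)$ be the section $\sigma(V)(u,v) = \pi([u,v])$, whose zero set is precisely the set of $k$-dimensional Lie subalgebras of $\gg$. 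This section is $G$-equivariant because the adjoint action of $G$ sends subalgebras to subalgebras. So the rigidity problem (Problem \ref{problem rigidity subalgebra}) is literally the statement that the $G$-orbit of $\hh$ contains a neighborhood of $\hh$ inside $\sigma^{-1}(0)$, together with a smooth choice of group element — which is exactly the conclusion of Proposition \ref{thm: IFT}.

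The key step is then to verify that $\hh$ is a \emph{non-degenerate zero} of $\sigma$ in the sense of the definition preceding Proposition \ref{thm: IFT}, i.e. that the sequence
\[
\xymatrix{
\gg \ar[r]^-{\d\mu_{\hh}} & T_{\hh}M \ar[r]^-{\d^{\mathrm{vert}}_{\hh}\sigma} & E_{\hh}
}
\]
is exact. For this I would unwind the two maps. Under the canonical identification $T_{\hh}\mathrm{Gr}_k(\gg) \cong \hom(\hh,\gg/\hh)$ recalled in Subsection \ref{subsection:algebra_subalgebras}, the infinitesimal orbit map $\d\mu_{\hh}: \gg \to \hom(\hh,\gg/\hh)$ sends $\al \in \gg$ to the map $u \mapsto [\al,u] \bmod \hh$ — this is exactly the expression $\al = \dot g_0$ that appeared in the proof of the corresponding Proposition on deformations of subalgebras, and one recognizes it as $\delta_{\hh}$ restricted to $0$-cochains (note $C^0(\hh,\gg/\hh) = \gg/\hh$, but the composite $\gg \to \gg/\hh \xrightarrow{\delta_{\hh}} \hom(\hh,\gg/\hh)$ is precisely this map). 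On the other hand the computation already performed in the proof of Theorem \ref{theorem:Kuranishi_subalgebras} gives $\d^{\mathrm{vert}}_{\hh}\sigma = \delta_{\hh}: \hom(\hh,\gg/\hh) \to \hom(\wedge^2\hh,\gg/\hh)$. Hence the displayed sequence is the beginning of the Chevalley--Eilenberg complex computing $H^{\bullet}(\hh,\gg/\hh)$, and its exactness at the middle term is by definition the vanishing of $H^1(\hh,\gg/\hh)$. Thus the hypothesis $H^1(\hh,\gg/\hh) = 0$ is equivalent to non-degeneracy of $\hh$.

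Applying Proposition \ref{thm: IFT} to $\sigma$ at the non-degenerate zero $\hh$ now yields an open neighborhood $U$ of $\hh$ in $M = \mathrm{Gr}_k(\gg)$ and a smooth map $h: U \to G$ such that for every $\hh' \in U$ with $\sigma(\hh') = 0$ — that is, for every $k$-dimensional Lie subalgebra $\hh'$ close to $\hh$ — one has $h(\hh')\cdot \hh = \hh'$, i.e. $\Ad_{h(\hh')}(\hh) = \hh'$. This is exactly the assertion of the theorem, and rigidity of $\hh$ in the sense of Problem \ref{problem rigidity subalgebra} follows immediately. The main obstacle is essentially bookkeeping: one must be careful about the identification $T_{\hh}\mathrm{Gr}_k(\gg)\cong\hom(\hh,\gg/\hh)$ and check that, under it, the infinitesimal action map $\d\mu_{\hh}$ really is the zeroth differential $\delta_{\hh}$ (up to the harmless identification $C^0 = \gg/\hh$ versus using all of $\gg$), so that the exact sequence genuinely computes $H^1(\hh,\gg/\hh)$; the rest is a direct invocation of Proposition \ref{thm: IFT}, whose proof already did the hard analytic work.
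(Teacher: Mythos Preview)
Your proposal is correct and follows essentially the same route as the paper: set up $M=\mathrm{Gr}_k(\gg)$, $E_V=\hom(\wedge^2V,\gg/V)$, the section $\sigma(V)(u,v)=[u,v]\bmod V$, and the adjoint $G$-action, then apply Proposition~\ref{thm: IFT} after identifying the non-degeneracy sequence with $\gg \to \hom(\hh,\gg/\hh)\xrightarrow{\delta_\hh}\hom(\wedge^2\hh,\gg/\hh)$ and noting that the first map factors through $\delta_\hh:\gg/\hh\to\hom(\hh,\gg/\hh)$, so exactness is exactly $H^1(\hh,\gg/\hh)=0$. Your write-up is in fact slightly more explicit than the paper's about the identification of $\d\mu_\hh$ and the equivariance of $\sigma$, but the argument is the same.
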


\begin{proof}
Again we let $M$ denote the Grassmannian of $k$-planes in $\gg$, $E\to M$ the vector bundle whose fiber over a subspace $V$ is given by $E_V = \hom(\wedge^2V,\gg/V)$ and consider the section $\sigma$ of $E$ given by
\[\sigma(V)(u,v) = [u,v]_{\gg}\mod V,\]
and the adjoint action of $G$ on $M$.

The rigidity problem can be reformulated as \emph{``when is it true that for every $\hh' \in \tilde{M}$ close to $\hh$, and such that $\sigma(\hh') = 0$, there exists $g \in G$ close the identity map such that $\Ad_g(\hh) = \hh'$?"}.

We apply Proposition \ref{thm: IFT}. A simple computation shows that $\hh$ is a non-degenerate zero of
$\sigma$ iff the sequence
\[\xymatrix{
\gg \ar[r] & \hom(\hh,\gg/\hh) \ar[r]^-{\delta_{\hh}} & \hom(\wedge^2\hh,\gg/\hh)}\]
is exact.
The first map factors through $\delta_{\hh}:\gg/\hh \to\hom(\hh,\gg/\hh)$ and the linear sequence is exact if and only if $H^1(\hh,\gg/\hh) = 0$.
\end{proof}

Theorem \ref{corol: rigidity subalgebra} corresponds to Theorem 11.4 and Corollary 11.5 of \cite{RN4}.

\begin{remark} \rm
The main difference between this problem and the rigidity problem for the inclusion of $\hh$ in $\gg$ is that in the subalgebra problem one does not impose a priori that $\hh'$ is isomorphic to $\hh$. 
\end{remark}

Finally, we apply Proposition \ref{thm: IFT2} to solve Problem \ref{problem: stable subalgebra} (Stability of Lie subalgebras).
This result was established by Richardson in \cite{RN5} (Theorem 6.2).

\begin{theorem} \label{corl: stability subalgebra}
Let $\hh$ be a Lie subalgebra of $\gg$. If $H^2(\hh,\gg/\hh) = 0$, then $\hh$ is a stable subalgebra of $\gg$. Moreover, in this case the space of $k$-dimensional Lie subalgebras of $\gg$ is locally a manifold of dimension equal to the dimension of $Z^1(\hh,\gg/\hh)$. 
\end{theorem}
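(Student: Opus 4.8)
The plan is to mimic the proof of Theorem \ref{corol: stability homomorphism}, replacing the space of homomorphisms by the Grassmannian and the curvature map $\K_\mu$ by the subalgebra-defining section $\sigma$, but now letting the bracket on $\gg$ vary. Concretely, let $M = \mathrm{Gr}_k(\gg)$ be the Grassmannian of $k$-planes, and let $E$, $F$ be the vector bundles with fibers $E_V = \hom(\wedge^2 V, \gg/V)$ and $F_V = \hom(\wedge^3 V, \gg/V)$ over $V \subset \gg$, exactly as in the proof of Theorem \ref{theorem:Kuranishi_subalgebras}. Fix an inner product on $\gg$, giving for each $V$ a splitting $s\colon \gg/V \to \gg$ of $0 \to V \to \gg \to \gg/V \to 0$ with associated projection $\omega_s = \mathrm{Id}_\gg - s\circ\pi$. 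For each Lie bracket $\mu$ on $\gg$ I would define a section
\[
\sigma_\mu \colon M \to E, \quad \sigma_\mu(V)(u,v) = \pi(\mu(u,v)),
\]
and a vector bundle map
\[
\phi_\mu\colon E \to F, \quad \phi_\mu(V)(\eta)(u,v,w) = \pi(\mu(s(\eta(u,v)),w)) + \eta(\omega_s(\mu(u,v)),w) + \text{ cyclic permutations},
\]
just as before but with $\mu$ in place of the original bracket.

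The key steps, in order, would be: (1) Observe that the Jacobi identity for $\mu$ gives $\phi_\mu(V)(\sigma_\mu(V)) = 0$ for all $V$ and all $\mu \in \Lambda$ (the space of Lie brackets on $\gg$), exactly as in Theorem \ref{theorem:Kuranishi_subalgebras}. (2) Note that the assignment $\mu \mapsto (\sigma_\mu, \phi_\mu)$ is continuous from $\Lambda$ into $\Gamma(E) \times \Gamma(\hom(E,F))$ equipped with the $\mathcal{C}^1$-topology, since the formulas for $\sigma_\mu$ and $\phi_\mu$ depend linearly (hence smoothly) on $\mu$ and the splitting $s$ varies smoothly with $V$. (3) Compute $\d^{\mathrm{vert}}_{\hh}\sigma_\mu = \delta_{\hh}$, which is the content already established in Theorem \ref{theorem:Kuranishi_subalgebras}, and identify the linear sequence
\[
\xymatrix{T_{\hh}M \ar[r]^-{\d^{\mathrm{vert}}_{\hh}\sigma} & E_{\hh} \ar[r]^-{\phi_{\hh}} & F_{\hh}}
\]
with
\[
\xymatrix{\hom(\hh,\gg/\hh) \ar[r]^-{\delta_{\hh}} & \hom(\wedge^2\hh,\gg/\hh) \ar[r]^-{\delta_{\hh}} & \hom(\wedge^3\hh,\gg/\hh),}
\]
which is exact precisely when $H^2(\hh,\gg/\hh) = 0$. (4) Invoke Proposition \ref{thm: IFT2}: statement (2) of that proposition, applied to $\sigma = \sigma_\mu$, $\phi = \phi_\mu$ and $\sigma' = \sigma_{\mu'}$, $\phi' = \phi_{\mu'}$ for $\mu'$ close to $\mu$, gives a $k$-plane $\hh'$ close to $\hh$ with $\sigma_{\mu'}(\hh') = 0$, i.e. $\hh'$ is a subalgebra of $(\gg,\mu')$; and statement (1) gives that $\sigma_\mu^{-1}(0) = \sigma_{[\cdot,\cdot]}^{-1}(0)$ is locally a manifold around $\hh$ of dimension $\dim\ker(\d^{\mathrm{vert}}_\hh\sigma) = \dim Z^1(\hh,\gg/\hh)$, which is the moduli statement.

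The main obstacle I anticipate is step (2): verifying that the dependence $\mu \mapsto (\sigma_\mu, \phi_\mu)$ is genuinely continuous in the $\mathcal{C}^1$-topology on sections, because the fibers $E_V$ and $F_V$ themselves vary with $V$ and the splitting $s = s_V$ is only defined after the choice of inner product. One must be careful that trivializing $E$ and $F$ in a neighborhood of $\hh$ — say by identifying $\gg/V$ with the orthogonal complement $\hh^\perp$ for $V$ near $\hh$ — turns $\sigma_\mu$ and $\phi_\mu$ into honest smooth maps $M \to \hom(\wedge^\bullet \hh, \hh^\perp)$ whose coefficients depend continuously and differentiably on $\mu$; once this is set up, everything reduces to the linear-algebra exactness computation of Theorem \ref{theorem:Kuranishi_subalgebras}, which we may quote. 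The degree-theoretic core of the argument is entirely absorbed into Proposition \ref{thm: IFT2}, so no new hard analysis is needed beyond bookkeeping.
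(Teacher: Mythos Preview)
Your proposal is correct and follows essentially the same approach as the paper: the paper defines exactly the same $\sigma_\mu$ and $\phi_\mu$, verifies $\phi_\mu(V)(\sigma_\mu(V))=0$ via Jacobi, identifies the linearized sequence at $\hh$ with the Chevalley--Eilenberg complex, and then invokes Proposition \ref{thm: IFT2}. Your additional care about the $\mathcal{C}^1$-continuity of $\mu \mapsto (\sigma_\mu,\phi_\mu)$ and the bundle trivialization is more explicit than the paper's terse treatment, but it is bookkeeping rather than a different idea.
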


\begin{proof}
We use the same notation as in the proof of Theorem \ref{theorem:Kuranishi_subalgebras}.

For each Lie bracket $\mu$ on $\gg$, we obtain a section
\[\sigma_{\mu}: M \longrightarrow E, \quad \sigma_{\mu}(V)(u,v) =  \pi\mu(u,v),\]
and a vector bundle map
\[\phi_{\mu}: E \longrightarrow F\] 
given by
\[\phi_{\mu}(V)(\eta)(u,v,w) = \pi\mu(s(\eta(u,v)),w) + \eta(\omega_s(\mu(u,v)),w) +\text{ cyclic permutations}.\] It follows from the Jacobi identity for $\mu$ that 
\[\phi_{\mu}(V)(\sigma_{\mu}(V)) = 0 \text{ for all } V\subset \gg.\]

Moreover, by differentiating at $\hh \in M$, we obtain
\[\xymatrix{\hom(\hh,\gg/\hh) \ar[r]^-{\delta_{\mu,\hh}}& \hom(\wedge^2\hh,\gg/\hh)\ar[r]^-{\delta_{\mu,\hh}} & \hom(\wedge^3\hh,\gg/\hh),}\]
which is exact if and only if $H^2(\hh,\gg/\hh) = 0$.

Thus, Proposition \ref{thm: IFT2} applies to this problem, and for each Lie bracket $\mu'$ on $\gg$ close to $\mu$, there exists a $\hh' \in M$ close to $\hh$ such that $\sigma_{\mu'}(\hh') = 0$. This concludes the proof.
\end{proof}


\begin{thebibliography}{10}

\bibitem{RN1} A. Nijenhuis, {\em Composition systems and deformations of subalgebras}, Indag. Math. {\bf 30} (1968), 119--136.

\bibitem{RN2} A. Nijenhuis and R. W. Jr. Richardson, {\em Cohomology and deformations of algebraic structures}, Bull. Amer. Math. Soc. {\bf 70} (1964), 406--411.

\bibitem{RN3} A. Nijenhuis, R. W. Jr. Richardson, {\em Deformations of homomorphisms of Lie groups and Lie algebras}, Bull. Amer. Math. Soc. {\bf 73} (1967), 175--179.

\bibitem{RN4}
R. W. Jr. Richardson, {\em A rigidity theorem for subalgebras of Lie and associative algebras}, Illinois J. Math. {\bf 11} (1967), 92--110.

\bibitem{RN5}
R. W. Jr. Richardson, {\em Deformations of subalgebras of Lie algebras}, J. Differential Geometry {\bf 3} (1969), 289--308.

\end{thebibliography}
\end{document}